\numberwithin{equation}{section}
\newtheorem{theorem}{Theorem}[subsection]
\newtheorem{lemma}[theorem]{Lemma}
\newtheorem{proposition}[theorem]{Proposition}
\newtheorem{corollary}[theorem]{Corollary}
\theoremstyle{definition}
\newtheorem{remark}[theorem]{Remark}
\newtheorem{example}[theorem]{Example}
\newcommand{\popo}{\mathbb{P}^1 \times \mathbb{P}^1}
\newcommand{\pnr}{\mathbb{P}^{n_1}\times \cdots \times \mathbb{P}^{n_t}}
\newcommand{\N}{\mathbb{N}}
\newcommand{\pr}{\mathbb{P}}
\newcommand{\NEF}{\operatorname{NEF}}
\newcommand{\EFF}{\operatorname{EFF}}
\begin{document}


\title[Symbolic powers and regular powers]{
Symbolic powers versus regular powers of ideals
of general points in $\popo$}

\author{Elena Guardo}
\address{Dipartimento di Matematica e Informatica\\
Viale A. Doria, 6 - 95100 - Catania, Italy}
\email{guardo@dmi.unict.it}
\urladdr{http://www.dmi.unict.it/$\sim$guardo/}

\author{Brian Harbourne}
\address{Department of Mathematics \\
University of Nebraska--Lincoln\\
Lincoln, NE 68588-0130, USA }
\email{bharbour@math.unl.edu}
\urladdr{http://www.math.unl.edu/$\sim$bharbourne1/}

\author{Adam Van Tuyl}
\address{Department of Mathematical Sciences \\
Lakehead University \\
Thunder Bay, ON P7B 5E1, Canada}
\email{avantuyl@lakeheadu.ca}
\urladdr{http://flash.lakeheadu.ca/$\sim$avantuyl/}

\keywords{symbolic powers, multigraded, points}
\subjclass[2000]{13F20, 13A15,14C20}
\thanks{Version: February 21, 2012}

\begin{abstract}
Recent work of Ein-Lazarsfeld-Smith and Hochster-Huneke
raised the problem of which symbolic powers of an ideal
are contained in a given ordinary power of the ideal.
Bocci-Harbourne developed methods to address this problem,
which involve asymptotic numerical characters of
symbolic powers of the ideals. Most of the work
done up to now has been done for ideals defining 0-dimensional
subschemes of projective space.
Here we focus on certain subschemes given by
a union of lines in $\pr^3$ which can also be viewed
as points in $\popo$.
We also obtain results on the
closely related problem, studied by Hochster and by Li-Swanson, of
determining situations for which
each symbolic power of an ideal is an ordinary power.
\end{abstract}

\maketitle


\renewcommand{\thetheorem}{\thesection.\arabic{theorem}}
\setcounter{theorem}{0}

\section{Introduction}

Refinements of the groundbreaking results of \cite{refELS, HaHu}
regarding which symbolic powers of ideals are contained in a given
ordinary power of the ideal have recently been given in \cite{BH,
BH2,BCH, HH}, with a focus on ideals defining 0-dimensional
subschemes of projective space. The methods mainly involve giving
numerical criteria, both for containment and for non-containment.
These criteria have been extended in \cite{GHVT} to ideals
defining smooth subschemes in $\pr^N$ and applied to the case of
disjoint unions of lines. The most difficult numerical character
needed for these results is denoted in these papers by
$\gamma(I)$. We pause briefly to recall its definition.

Throughout this paper we work over an algebraically closed field
$k$ of arbitrary characteristic. Let $k[\pr^N]$ denote the
polynomial ring $k[x_0,\ldots,x_N]$ with the standard grading (so
each variable has degree 1). Given any homogeneous ideal $(0)\neq
I\subseteq k[\pr^N]$, $\alpha(I)$ denotes the least degree of a
nonzero form (i.e., homogeneous element) in $I$. Then the limit
$\lim_{m\to\infty}\alpha(I^{(m)})/m$ is known to exist (see, for
example, \cite[Lemma 2.3.1]{BH}), and is denoted by $\gamma(I)$.

A large amount of work has been done studying $\gamma(I)$, in a
range of contexts (including number theory \cite{ refCh, refW,
refW2}, complex analysis \cite{refSk}, algebraic geometry
\cite{BH, BH2, refEV,  refSch} and commutative algebra
\cite{HaHu}), with an emphasis on the case that $I$ defines a
0-dimensional subscheme. Our focus here will be on computing
$\gamma(I)$ for ideals of lines in $\pr^3$. A special case for
which $\gamma(I)$ can be computed is when the symbolic powers
$I^{(m)}$ and ordinary powers $I^m$ all coincide. This is because
if $I^{(m)}=I^m$ for all $m\geq 1$, then
$\alpha(I^{(m)})=\alpha(I^m)=m\alpha(I)$, hence
$\gamma(I)=\alpha(I)$. Thus we will also be interested in
distinguishing when $I^{(m)}=I^m$ for $m\geq 1$ occurs and when it
doesn't. It has been known for a long time that $I^{(m)}=I^m$
holds for all $m\geq 1$ when $I$ is a complete intersection (i.e.,
defined by a regular sequence; see \cite[Lemma 5, Appendix
6]{ZS}). What is of interest is when $I$ is not a complete
intersection. This also is a remarkably difficult problem; partial
results have been obtained for example by \cite{Ho, LS}.

The reason for our focus on ideals of certain unions of lines in $\pr^3$
is that for the cases we will consider the questions can be converted
into ones involving symbolic powers of ideals of points in $\popo$, using the fact
that we can regard a point in $\popo$ as being defined by a bigraded
ideal $I$ in $k[\popo]=k[x_0,\ldots,x_3]$. Since $k[\popo]=k[\pr^3]$ as rings,
we can regard $I$ as defining a subscheme of $\pr^3$, but the key is that
an ideal $I$ defining a point in $\popo$ when regarded as a bigraded ideal in
the bigraded ring $k[\popo]$, defines a line in $\pr^3$ when regarded as a
singly graded ideal in the usual grading on $k[\pr^3]$; see Remark \ref{ptsVlinesRem}.
Thus the ideal of a finite set of points in $\popo$ is simultaneously
(but with respect to a different grading) the ideal of a finite set of lines $\pr^3$.
(As a specific example, the ideal of  $s\leq4$ general points of $\popo$
is the ideal of $s$ general lines of $\pr^3$; see Remark \ref{ptsVlinesRem}.
For $s>4$, the ideal of $s$ points of $\popo$ is the ideal of $s$ lines
in $\pr^3$, but the lines are never general, even if the points are.)
Moving to $\popo$ makes available to us the vast array of work
done on products of projective spaces and surfaces in general, and on
$\popo$ in particular;  see, for example \cite{GuMaRa,Gu2,GVT,Ma,SVT,VTgen}.

Our main results are Corollary \ref{boundsCor} and Theorem \ref{thm1}.

\begin{corollary}\label{boundsCor}
Let $I$ be the ideal of $s\geq 1$ general points of $\popo$.
\begin{itemize}
\item[$\bullet$] If $s=1$, then $\gamma(I)=1$.
\item[$\bullet$] If $s=2$ or 3, then $\gamma(I)=2$.
\item[$\bullet$] If $s=4$, then $\gamma(I)=8/3$.
\item[$\bullet$] If $s=5$, then $\gamma(I)=3$.
\item[$\bullet$] If $s=6$, then $\gamma(I)=24/7$.
\item[$\bullet$] If $s=7$, then $\gamma(I)=56/15$.
\item[$\bullet$] If $s=8$, then $\gamma(I)=4$.
\item[$\bullet$] If $9\leq s$, then $\sqrt{s}-1<\gamma(I)\leq\sqrt{2s}$.
\end{itemize}
\end{corollary}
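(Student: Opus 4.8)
The statement splits into the exact values for $1\le s\le 8$ and the asymptotic double bound for $s\ge 9$, which I would treat separately. Throughout, recall that for reduced points $P_1,\dots,P_s$ the symbolic power $I^{(m)}=\bigcap_i \mathfrak{p}_i^{\,m}$ consists of the forms vanishing to order at least $m$ at each $P_i$, that $\alpha$ is measured in the total degree (so $\alpha(F)=a+b$ for a form $F$ of bidegree $(a,b)$), and that $\gamma(I)=\lim_m \alpha(I^{(m)})/m=\inf_m \alpha(I^{(m)})/m$. The finitely many cases $s=1,\dots,8$ I would read off from the exact determination of the sequence $\alpha(I^{(m)})$ for these configurations (e.g. Theorem \ref{thm1} and the low-$s$ analysis); for instance, for $s=1$ the ideal is a complete intersection, so $I^{(m)}=I^m$ and $\gamma(I)=\alpha(I)=1$, while $s=8$ turns out to attain the general upper bound $\sqrt{2\cdot 8}=4$. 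My focus is therefore the last bullet.

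For the upper bound $\gamma(I)\le\sqrt{2s}$ I would argue by a pure dimension count, which needs no genericity. The space of forms of bidegree $(a,a)$ on $\popo$ has dimension $(a+1)^2$, while vanishing to order $m$ at a smooth point imposes at most $\binom{m+1}{2}$ linear conditions, hence at most $s\binom{m+1}{2}$ conditions at the $s$ points. Thus a nonzero such form exists as soon as $(a+1)^2>s\binom{m+1}{2}$, so taking $a=\big\lceil \sqrt{s\,m(m+1)/2}\,\big\rceil$ gives $\alpha(I^{(m)})\le 2a$. Dividing by $m$ and letting $m\to\infty$ yields $\gamma(I)\le\sqrt{2s}$.

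For the lower bound $\gamma(I)>\sqrt{s}-1$ I would use an auxiliary curve together with the Bézout inequality on $\popo$, where $(a,b)\cdot(c,d)=ad+bc$. Fix $m$ and let $F\in I^{(m)}$ realize $\alpha(I^{(m)})$, say of bidegree $(a,b)$ with $a+b=\alpha(I^{(m)})$ and $\operatorname{mult}_{P_i}(F)\ge m$ for all $i$. Put $t=\lfloor\sqrt{s}\rfloor+1$, so that $t>\sqrt{s}$. Since vanishing at a point is one condition, the space of $(t,t)$-forms through $P_1,\dots,P_s$ has dimension at least $(t+1)^2-s>2\sqrt{s}>0$, so these curves move in a positive-dimensional family, and (using the generality of the points) one can choose a member $G$ sharing no component with the fixed curve $F$. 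Then $\operatorname{mult}_{P_i}(G)\ge 1$ for all $i$, and Bézout gives
\[
t\,\alpha(I^{(m)}) = (a,b)\cdot(t,t) \;\ge\; \sum_{i=1}^{s}\operatorname{mult}_{P_i}(F)\,\operatorname{mult}_{P_i}(G)\;\ge\; ms .
\]
Hence $\alpha(I^{(m)})\ge ms/t$ for every $m$, and therefore
\[
\gamma(I)\;\ge\;\frac{s}{t}\;=\;\frac{s}{\lfloor\sqrt{s}\rfloor+1}\;\ge\;\frac{s}{\sqrt{s}+1}\;=\;\frac{s(\sqrt{s}-1)}{s-1}\;>\;\sqrt{s}-1 .
\]

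The main obstacle is precisely the Bézout step, namely producing an auxiliary curve $G$ through the $s$ points that shares no component with the a priori unknown minimal form $F$. I would secure this from the positive dimension of the linear system together with the generality of the points: for general points this system has no fixed component, so since $F$ has only finitely many irreducible components, a general $G$ avoids all of them and Bézout applies. The remaining points are routine bookkeeping: verifying that the slack $(t+1)^2-s$ is genuinely positive for every $s\ge 9$ (it exceeds $2\sqrt{s}$ for the chosen $t$), checking the elementary inequality $s/(\lfloor\sqrt{s}\rfloor+1)>\sqrt{s}-1$, and confirming that the small-$s$ limits follow once $\alpha(I^{(m)})$ is known in those cases.
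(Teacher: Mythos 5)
Your proposal has a genuine gap exactly where the corollary has real content: the cases $s=4,6,7,8$. You propose to ``read off'' the values $8/3$, $24/7$, $56/15$ and $4$ from ``the exact determination of the sequence $\alpha(I^{(m)})$ \dots\ (e.g.\ Theorem \ref{thm1} and the low-$s$ analysis),'' but no such determination exists elsewhere in the paper, and Theorem \ref{thm1} says precisely that $I^{(m)}\neq I^m$ for these $s$, so the shortcut $\gamma(I)=\alpha(I)$ (which does legitimately handle $s=1,2,3,5$, giving $1,2,2,3$) fails: e.g.\ for $s=4$ one has $\alpha(I)=3$ but $\gamma(I)=8/3$. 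Nor can your two general tools produce these constants: your dimension count gives only $\gamma(I)\le\sqrt{2s}$ (for $s=4$ this is $\sqrt8\approx 2.83$, not $8/3$), and your B\'ezout bound gives only $\gamma(I)\ge s/t$ for an integer $t>\sqrt s$, i.e.\ $4/3$, $2$, $7/3$, $8/3$ for $s=4,6,7,8$ --- in particular your claim that $s=8$ ``turns out to attain'' the upper bound $4$ is exactly the assertion requiring proof, with nothing in your toolkit giving a lower bound above $8/3$ there. The paper's missing idea is Lemma \ref{gammaLemma}: on the blow-up $X$ one exhibits an effective divisor $C=\lambda(H+V)-m(E_1+\cdots+E_s)$ (giving $\gamma(I)\le 2\lambda/m$) together with a nef divisor $D=t(H+V)-r(E_1+\cdots+E_s)$ orthogonal to $C$ (giving the matching lower bound $\gamma(I)\ge sr/t$), with effectivity and nefness verified by decomposing into the exceptional curves classified in Lemma \ref{exccurvesLemma}; e.g.\ for $s=6$, $C=12(H+V)-7(E_1+\cdots+E_6)$ is a sum of the six exceptional classes $2(H+V)-(E_1+\cdots+E_6)-E_i$ and $D=7(H+V)-4(E_1+\cdots+E_6)$, yielding $\gamma(I)=24/7$; for $s=8$ one takes $C=D=-K_X$.

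Your last bullet, by contrast, is essentially correct and takes a somewhat different route from the paper. The upper bound by counting conditions ($(a+1)^2>s\binom{m+1}{2}$) is sound and more elementary than the paper's argument that $C^2>0$ forces $tC$ effective for $t\gg0$. For the lower bound the paper computes $\alpha(I)>2(\sqrt s-1)$ and invokes the bound $\gamma(I)\ge\alpha(I)/2$ from \cite[Section 2]{GHVT}, whereas your direct B\'ezout argument against a moving $(t,t)$-curve gives $\gamma(I)\ge s/(\lfloor\sqrt s\rfloor+1)\ge s/(\sqrt s+1)=\sqrt s-1+1/(\sqrt s+1)$, marginally sharper and self-contained. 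One caveat: positive-dimensionality of the system of $(t,t)$-forms through the points does not by itself let you choose $G$ sharing no component with $F$, since a fixed component of the system could divide $F$; you flag this and it is true for general points (every subset imposes independent conditions --- the paper's multiplicity~$1$ generic property --- so a residual dimension count rules out a fixed component of bidegree $(c,d)$ through at most $(c+1)(d+1)-1$ of the points), but it is an unproven lemma as written, and the $s=4$ case shows common components are a genuine phenomenon: the form computing $\alpha(I^{(3)})\le 8$ there is exactly the product of the four $(1,1)$-forms through triples of the points, so B\'ezout against those curves fails.
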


See section \ref{sect2} for the proof.

\begin{theorem}\label{thm1}
Let $I$ be the ideal of a set $Z$ of $s$ general points in $\popo$.
Then $I^m=I^{(m)}$ for all $m>0$ if and only if $s$ is $1, 2, 3$ or $5$.
Moreover, $I^{(4)}\neq I^3$ if $s=4$ and $I^{(2)}\neq I^2$ if $s\geq 6$.
\end{theorem}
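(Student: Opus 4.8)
The plan is to use the reformulation that, since the $s$ points are smooth, the isolated primary components of $I^m$ are already the correct powers $\mathfrak{p}_i^m$ (where $\mathfrak{p}_i$ is the prime of the $i$-th point in $R=k[\popo]$); consequently $I^{(m)}=\bigcap_i\mathfrak{p}_i^m$, and $I^{(m)}=I^m$ holds if and only if $I^m$ has no embedded primary component, equivalently if and only if the bigraded Hilbert function of $R/I^m$ agrees with that of $R/I^{(m)}$ in every bidegree. The statement then splits into sufficiency ($s\in\{1,2,3,5\}$) and necessity ($s=4$ or $s\ge 6$); the necessity is precisely the content of the two ``moreover'' assertions, so establishing those proves the ``only if'' direction as well.

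For necessity I would proceed as follows. Each double point imposes at most three conditions in any bidegree $(a,b)$, so $\dim_k(I^{(2)})_{(a,b)}\ge (a+1)(b+1)-3s$; hence $I^{(2)}$ contains a form whose total degree $a+b$ is as small as the least with $(a+1)(b+1)>3s$. Comparing this with $\alpha(I^2)=2\alpha(I)$ already settles many values: for $s=6$ one finds $\alpha(I^{(2)})\le 7<8=2\alpha(I)$, so $I^{(2)}\ne I^2$. For the values of $s$ where the two initial degrees happen to coincide (for example $s=7$, where both equal $8$) this crude comparison fails, and I would instead compare the two dimensions in a fixed bidegree: bound $\dim_k(I^2)_{(a,b)}$ from above by the number of bidegree-$(a,b)$ products of pairs drawn from a minimal generating set of $I$, and show that this stays strictly below the lower bound $(a+1)(b+1)-3s$ for $\dim_k(I^{(2)})_{(a,b)}$. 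For $s=4$ the bidegree count at $m=2$ is inconclusive ($\alpha(I^{(2)})=2\alpha(I)=6$), so here I would invoke $\gamma(I)=8/3$ from Corollary \ref{boundsCor}: because $\alpha(I^{(m)})$ is subadditive, $\gamma(I)=\inf_m \alpha(I^{(m)})/m$, whence $\alpha(I^{(4)})\ge 4\gamma(I)=32/3$, i.e. $\alpha(I^{(4)})\ge 11>9=3\alpha(I)=\alpha(I^3)$; the ideals $I^{(4)}$ and $I^3$ thus have different initial degrees and cannot be equal. (The same non-integrality of $\gamma(I)$ shows at once that $I^{(m)}=I^m$ must fail for some $m$.)

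For sufficiency I would treat the four cases in increasing order of difficulty. For $s=1$ the ideal is that of a single line in $\pr^3$, a complete intersection of two planes, so $I^{(m)}=I^m$ by the classical fact recalled in the introduction. For $s=2$ I would move the two general points to two coordinate points of $\popo$; the ideal then becomes the squarefree monomial ideal $(x_0x_1,\,y_0y_1,\,x_0y_1,\,x_1y_0)$, which is the edge ideal of the even $4$-cycle on $\{x_0,x_1,y_0,y_1\}$. Since this graph is bipartite, its edge ideal is normally torsion-free, so no power acquires an embedded prime and $I^{(m)}=I^m$ for all $m$. The cases $s=3$ and $s=5$ are the heart of the matter: here $Z$ cannot be moved to coordinate points, and its saturated ideal requires more than two generators, so it is not a complete intersection. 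My plan is to show that $Z$ is arithmetically Cohen--Macaulay, write down its explicit Hilbert--Burch resolution, and use that resolution to prove, by induction on $m$, that no embedded prime ever appears in $I^m$ (equivalently that the relevant local cohomology of $R/I^m$ vanishes in every bidegree). This gives $I^{(m)}=I^m$ for all $m$.

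The main obstacle is this last step: proving that every ordinary power stays free of embedded primes --- that is, that $I$ is normally torsion-free --- for $s=3$ and $s=5$, where the configuration is neither monomial nor a complete intersection and one must control all powers simultaneously rather than a single one. A secondary difficulty is the lack of uniformity in the necessity argument for $s\ge 6$: the clean initial-degree inequality $\alpha(I^{(2)})<2\alpha(I)$ does not hold for every such $s$ (it fails at $s=7$), so those exceptional values force the more delicate bidegree-by-bidegree dimension comparison.
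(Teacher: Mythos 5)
There is a genuine gap, and it sits at the heart of the proof: your treatment of the sufficiency cases $s=3$ and $s=5$. Your plan is to show that $Z$ is arithmetically Cohen--Macaulay, extract a Hilbert--Burch resolution, and induct on $m$; but a reduced set of $s>1$ general points in $\popo$ is \emph{never} arithmetically Cohen--Macaulay (the paper says this explicitly in the introduction), so the very first step fails and there is no Hilbert--Burch matrix to work with. Moreover, even if a resolution of $I$ were available, ``induct on $m$ so that no embedded prime ever appears'' is not an argument: normal torsion-freeness does not follow formally from knowing a resolution of $I$ itself, and you offer no mechanism to control all powers. The paper's actual proofs of Theorems \ref{3pts} and \ref{5pts} are the technical core of the whole article and are geometric: one shows degree-by-degree that $(I^{(m)})_{(i,j)}\subseteq I^{(m-1)}I$ (together with $FI^{(m-1)}$ for $s=3$), using B\'ezout arguments, Zariski decompositions of $iH+jV-mE$ on the blow-up $X$ of $\popo$, and surjectivity of multiplication maps $H^0(X,A-L)\otimes H^0(X,L)\to H^0(X,A)$ via Fitchett's maximal-rank theorem \cite{refF}, normal generation results of \cite{refHa}, and a Mumford-type restriction to an elliptic curve \cite{refMu} for the sporadic classes $f(5,5,3)$, $(4,3,2)+f(5,5,3)$ and $(2,2,1)+f(5,5,3)$. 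Nothing in your proposal supplies a substitute for this machinery, and you yourself flag this step as the main obstacle without a viable route through it.

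The remainder of your proposal is essentially sound. The cases $s=1$ (complete intersection) and $s=2$ are fine; for $s=2$ your observation that the coordinate-point ideal $(x_0x_1,x_0y_1,x_1y_0,y_0y_1)$ is the edge ideal of the bipartite $4$-cycle, hence normally torsion-free, is a valid alternative to the paper's citation of \cite[Lemma 4.1]{GHVT}. Your $s=6$ argument is exactly Proposition \ref{6pts}, and your $s\geq7$ plan (compare dimensions of $(I^2)_{(a,b)}$ and $(I^{(2)})_{(a,b)}$ in a well-chosen bidegree) is precisely the strategy of Theorem \ref{secondpowervssecondsymbolic}, though you leave it as a sketch: the paper must choose $(3,q_1+q_2)$ with $s=2q_1+r_1=3q_2+r_2$, kill all cross-terms $I_{(a,b)}I_{(c,d)}$, and verify $q_2+4-2r_1-r_2>(2-r_1)(3-r_2)$, so real work remains there. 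Your $s=4$ argument is correct and is a genuinely different route from the paper's: subadditivity of $m\mapsto\alpha(I^{(m)})$ gives $\gamma(I)=\inf_m\alpha(I^{(m)})/m$, so Corollary \ref{boundsCor} yields $\alpha(I^{(4)})\geq\lceil 32/3\rceil=11>9=\alpha(I^3)$, proving $I^{(4)}\neq I^3$, whereas the paper exhibits a $(4,4)$-form to get $\alpha(I^{(3)})\leq 8<9$ and invokes Theorem \ref{squaregenericpoints} with $t=2$. You were also right to add the parenthetical that $\gamma(I)=8/3\neq 3=\alpha(I)$ forces $I^{(m)}\neq I^m$ for some $m$: the bare inequality $I^{(4)}\neq I^3$ would hold even if all symbolic powers were ordinary powers, so it alone does not give the ``only if'' direction at $s=4$. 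None of these correct pieces, however, rescues the missing $s=3,5$ argument, so the proposal as it stands does not prove the theorem.
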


See section \ref{sect3} for the proof. We note that the ideal
$I$ of $s$ general points of $\popo$ is a complete intersection if and only if $s=1$
(see the paragraph right before Proposition \ref{completeintersections}).

Whereas most of our focus in this paper is on sets of points in general position in $\popo$,
points not in general position can also be of interest; note for example that a reduced scheme
consisting of $s>1$ general points in $\popo$ is never arithmetically Cohen-Macaulay.
In a forthcoming paper we will study finite sets of points which are
arithmetically Cohen-Macaulay subschemes of $\popo$.

\noindent
{\bf Acknowledgments.}  We would like to thank Irena Swanson for answering
some of our questions.
This work was facilitated by the Shared Hierarchical
Academic Research Computing Network (SHARCNET:www.sharcnet.ca) and Compute/Calcul Canada.
Computer experiments carried out on {\tt CoCoA} \cite{C} and
{\it Macaulay2} \cite{Mt} were very helpful in guiding our research.
The second author's work on this project
was sponsored by the National Security Agency under Grant/Cooperative
agreement ``Advances on Fat Points and Symbolic Powers,'' Number H98230-11-1-0139.
The United States Government is authorized to reproduce and distribute reprints
notwithstanding any copyright notice.
The third author acknowledges the support provided by NSERC.


\renewcommand{\thetheorem}{\thesubsection.\arabic{theorem}}
\setcounter{theorem}{0}

\section{Background}\label{sect2}

\subsection{Points in $\popo$ and their ideals}
For the convenience of the reader, we begin with a review of
multi-graded ideals
arising in the context of products of projective space.

The multi-homogeneous coordinate ring $k[\pnr]$ of $\pnr$ is
$$k[x_{1,0},\ldots,x_{1,n_1},\ldots,x_{t,0},\ldots,x_{t,n_t}].$$
It has a multi-grading given by
$$\deg(x_{i,j})=e_i=(0,\ldots,0,1,0,\ldots,0) \in \N^t,$$
where the 1 is in the $i$th position.
The ring $k[\pnr]$ is a direct sum of its multi-homogeneous components
$k[\pnr]_{(a_1,\ldots,a_t)}$,
where $k[\pnr]_{(a_1,\ldots,a_t)}$ is the $k$-vector space span of the monomials
of multi-degree $(a_1,\ldots,a_t)$.
An ideal $I\subseteq k[\pnr]$ is multi-homogeneous if it is the direct sum
of its multi-homogeneous components (i.e., of $k[\pnr]_{(a_1,\ldots,a_t)}\cap I$).
Note that a multi-homogeneous ideal $I$ can be regarded as
a homogeneous ideal in $k[\pr^N]$, $N=n_1+\cdots+n_t+t-1$,
where a monomial of multi-degree $(a_1,\ldots,a_t)$ has degree
$d=a_1+\cdots+a_t$ and the homogeneous component of $I$ of degree $d$ is
$I_d=\bigoplus_{\sum_ia_i=d} I_{(a_1,\ldots,a_t)}$.
However, when  $t>1$, a multi-homogeneous ideal $I$
when regarded as being homogeneous
never defines a 0-dimensional subscheme of $\pr^N$,
even if $I$ defines a zero-dimensional subscheme of $\pnr$.
For example, the multi-homogeneous ideal $I$ of a finite set of points in $\popo$
defines a finite set of lines in $\pr^3$, which are skew (and thus not a cone)
if no two of the points lie on the same horizontal or vertical rule
of $\popo$ (see Remark \ref{ptsVlinesRem}), and not a complete intersection unless the points
comprise a rectangular array in $\popo$ (see the paragraph right before
Proposition \ref{completeintersections}).

Let $R = k[\popo]$, where we will use the standard multi-grading for $R$.
That is, $R = k[x_0,x_1,y_0,y_1]$,
with $\deg x_i = (1,0)$ and $\deg y_i = (0,1)$.
Let $I\subseteq R$ be a multi-homogeneous ideal (because
$R$ is bigraded, we sometimes say $I$ is bihomogeneous). Then $I$ has a
multi-homogeneous primary decomposition, i.e., a primary decomposition
$I=\bigcap_i Q_i$ where each $\sqrt{Q_i}$ is a multi-homogeneous
prime ideal, and $Q_i$ is multi-homogeneous and $\sqrt{Q_i}$-primary
\cite[Theorem 9, p. 153]{ZS}.
We define the $m$-th {\it symbolic power} of $I$ to be the ideal $I^{(m)}=\bigcap_j P_{i_j}$,
where $I^m=\bigcap_i P_i$ is a multi-homogeneous primary decomposition,
and the intersection $\bigcap_j P_{i_j}$ is over all components $P_i$ such that
$\sqrt{P_i}$ is contained in an associated prime of $I$.
In particular, we see that $I^{(1)}=I$ and that $I^m\subseteq I^{(m)}$.

Of particular interest to this paper is the case that $I$ is the ideal of a set
$Z$ of $s$ distinct
reduced points of $\popo$, i.e., $Z = \{P_1,\ldots,P_s\}$.   A point has the
form $P =[a_{0}:a_{1}] \times [b_{0}:b_{1}] \in \popo$ and its
defining ideal $I(P)$ in $R$ is a prime ideal of the form $I(P)
=(F,G)$ where $\deg F =(1,0)$ and $\deg G =(0,1)$.
The ideal $I(Z)$ is then given by $I(Z) = \bigcap_{i=1}^s I(P_i)$.  Furthermore,
the $m$-th symbolic power of $I(Z)$ has the form $I(Z)^{(m)} = \bigcap_{i=1}^s I(P_i)^m$.
The scheme defined by $I(Z)^{(m)}$ is sometimes referred to as a {\em fat point
scheme}, and denoted $mP_1+\cdots + mP_s$.

\begin{remark}\label{ptsVlinesRem}
Note that while the gradings on the rings $k[\popo]$ and $k[\pr^3]$
are different (and hence $k[\popo]$ and $k[\pr^3]$ are not isomorphic as graded rings),
the underlying rings are the same; in particular,
$k[\popo]=k[x_0,x_1,y_0,y_1]=k[\pr^3]$. A given ideal in this common underlying ring
can define non-isomorphic subschemes depending on which graded structure
we use. For example, the irrelevant ideals $(x_0,x_1)$
and $(y_0,y_1)$ corresponding to the two factors of $\pr^1$ in $\popo$
define a pair of skew lines $L_1\cong\pr^1$ and $L_2\cong\pr^1$ in $\pr^3$,
where $I(L_1)=(y_0,y_1)$ and hence $k[L_1]=k[x_0,x_1]$, and similarly
$I(L_2)=(x_0,x_1)$ and $k[L_2]=k[y_0,y_1]$.
Thus the point $P=[a_0:a_1] \times [b_0:b_1] \in \popo$
corresponds to a pair of points $P_1=[a_0:a_1] \in L_1$ and $P_2=[b_0:b_1]\in L_2$ and
the ideal $I(P)$ defines the line $L_P$ in $\pr^3$ through the points $P_1$ and $P_2$.
Given distinct points $P,Q\in\popo$, the lines $L_P$ and $L_Q$ meet if and only if
either $P_1=Q_1$ or $P_2=Q_2$; i.e., if and only if $P$ and $Q$ are both
on the same horizontal rule or both on the same vertical rule of $\popo$.

Given any single line $L\subset\pr^3$, lines $L_1\cong\pr^1$ and $L_2\cong\pr^1$ in $\pr^3$
can be found such that $I(L)$ is the ideal of a single point in $\popo$.
Likewise, for any two lines $L,L'\subset \pr^3$, lines $L_1\cong\pr^1$ and $L_2\cong\pr^1$ in $\pr^3$
can be found such that $I(L\cup L')$ is the ideal of two points in $\popo$, and
if the lines are general so are the points.
Consider three general lines $L,L'L''$. There is a unique smooth quadric $Q$ (isomorphic to $\popo$)
containing them. The lines $L,L',L''$ lie in a single ruling of $Q$,
and we can take $L_1$ and $L_2$ to be any two lines in the other ruling; with respect
to $L_1$ and $L_2$, $I(L\cup L'\cup L'')$ defines 3 general points of $\popo$.
(Note that the $\popo$ defined by $L_1$ and $L_2$ is not canonically the quadric $Q$ itself,
although $Q$ is isomorphic to $\popo$ abstractly.)
Finally, consider four general lines $L,L',L'',L'''$. Then $L,L'$ and $L''$ determine $Q$ and lie in a giving
ruling on $Q$, and $L'''$ meets $Q$ in two points. We take $L_1$ and $L_2$
to be the
lines of the other ruling through these two points. Now with respect
to $L_1$ and $L_2$, $I(L\cup L'\cup L'')$ defines four general points in $\popo$.
\end{remark}

One situation for which $I^{(m)}=I^m$ for all $m$ occurs
is the case that $I$ is a complete intersection,
meaning that $I$ has a set of $t$ generators, where $t$ is the codimension.
For example, suppose $I$ is the ideal of a finite set $Z$ of points of
$\pr^1 \times \cdots \times \pr^1=(\pr^1)^t=Y$. Then $\operatorname{codim}_{Y}(Z)=t$,
so $I$ is a complete intersection if it is generated by $t$ elements of $I$.
As noted in \cite[Remark 1.3]{GuMaRa} for $t=2$ (but which extends naturally to all $t \geq 2$),
an ideal $I$ of a finite set of points $Z\subset Y$ is a complete intersection
if and only if $Z$ is a rectangular array
of points (i.e., $Z=X_1\times\cdots\times X_t$ for finite sets $X_i\subset \pr^1$).

\begin{proposition}\label{completeintersections}
Let $X_1,\ldots,X_t \subseteq \pr^1$ be finite sets of points, and
let $I$ be the ideal of
$Z = X_1 \times X_2 \times \cdots \times X_t \subseteq
\pr^1 \times \cdots \times \pr^1$. Then $I^m = I^{(m)}$ for all $m \geq 1$.
\end{proposition}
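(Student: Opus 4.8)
The plan is to reduce the statement to the classical fact that ordinary and symbolic powers coincide for a complete intersection, the one subtlety being to check that the multihomogeneous notion of symbolic power used here introduces nothing new. First I would exhibit explicit generators making $I$ a complete intersection. Working in the coordinate ring $S=k[x_{1,0},x_{1,1},\dots,x_{t,0},x_{t,1}]$ of $\pr^1\times\cdots\times\pr^1$, each reduced finite set $X_i\subseteq\pr^1$ is cut out by a single squarefree form $f_i\in k[x_{i,0},x_{i,1}]$ of degree $d_i=|X_i|$, so that $f_i$ is multihomogeneous of degree $d_ie_i$. Since a point lies in $Z=X_1\times\cdots\times X_t$ exactly when each coordinate lies in the corresponding $X_i$, we have $Z=V(f_1,\dots,f_t)$; and because the $f_i$ involve pairwise disjoint variables and are squarefree, $S/(f_1,\dots,f_t)\cong\bigotimes_{i=1}^{t}k[x_{i,0},x_{i,1}]/(f_i)$ is reduced of Krull dimension $t$. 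Hence $I=(f_1,\dots,f_t)$ with $\operatorname{codim}I=t$, so $f_1,\dots,f_t$ is a regular sequence and $I$ is a complete intersection, as already recalled (for $Z$ a rectangular array) in the paragraph preceding the statement.

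Next I would establish the essential point: for such an $I$ the ordinary power $I^m$ has no embedded associated primes. Because $f_1,\dots,f_t$ is a regular sequence, the associated graded ring $\operatorname{gr}_I(S)$ is a polynomial ring $(S/I)[T_1,\dots,T_t]$, so each $I^m/I^{m+1}$ is a free $S/I$-module, in particular Cohen-Macaulay of dimension $\dim S/I$. Since $S/I$ itself is Cohen-Macaulay, an induction on $m$ using the short exact sequences
\[
0\to I^m/I^{m+1}\to S/I^{m+1}\to S/I^m\to 0
\]
together with the depth lemma shows that every $S/I^m$ is Cohen-Macaulay, hence unmixed of dimension $\dim S/I$. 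Therefore each associated prime of $I^m$ is a minimal prime of $I$, namely one of the ideals $I(P)$ for $P\in Z$, and these are exactly the associated primes of $I$.

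Finally I would conclude. Since $I^m$ has no embedded components, every multihomogeneous primary decomposition of $I^m$ consists only of components whose radicals lie among the associated primes of $I$; thus the intersection defining $I^{(m)}$ runs over all of the components, giving $I^{(m)}=I^m$ for every $m\geq 1$. (Alternatively, once $I$ is identified as a complete intersection one may invoke \cite[Lemma 5, Appendix 6]{ZS} directly.) I expect the main obstacle to be not the complete-intersection bookkeeping, which is routine manipulation of the defining forms, but the verification that the paper's multigraded symbolic power agrees with the ordinary power --- that is, the unmixedness of $I^m$ --- which is precisely what the Cohen-Macaulayness of $S/I^m$ supplies.
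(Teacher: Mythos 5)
Your proposal is correct and takes essentially the same route as the paper: both identify $I=(f_1,\ldots,f_t)$ as a complete intersection (the paper writes $I=I(X_1)R+\cdots+I(X_t)R$) and then reduce to the classical fact that symbolic and ordinary powers of a complete intersection agree, which the paper simply cites from Zariski--Samuel (Lemma 5, Appendix 6). The only difference is that you prove that cited fact in-line --- via $\operatorname{gr}_I(S)\cong(S/I)[T_1,\ldots,T_t]$, the induction showing each $S/I^m$ is Cohen--Macaulay, and the resulting unmixedness of $I^m$ --- which is a valid, standard argument and has the merit of explicitly verifying that the paper's multihomogeneous definition of symbolic power introduces nothing new.
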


\begin{proof} Under these hypotheses, $I = I(X_1)R + \cdots + I(X_t)R$
with $R = k[\pr^1 \times \cdots \times \pr^1]$ and $I(X_i)$ is the
defining ideal of $X_i$ in $k[\pr^1]$.  The ideal $I$ is
then a complete intersection.  For any complete intersection $I$, we
have $I^m = I^{(m)}$ for all $m \geq 1$
(see \cite[Lemma 5, Appendix 6]{ZS}).
\end{proof}

\subsection{Hilbert functions and points in multiplicity 1 generic position}
Let $Z\subseteq\pr^N$ be the subscheme defined by a homogeneous ideal $I$ in $k[\pr^N]$.
We recall that the {\it Hilbert function} $H_Z$ of $Z$ is defined to be
$H_Z(t)=\dim k[\pr^N]_t-\dim I_t$, where for a graded module $M$, $M_t$
denotes the homogeneous piece of degree $t$.
Similarly, recall that the Hilbert function $H_Z$ of a subscheme $Z\subseteq \popo$
is defined to be $H_Z(i,j)=\dim k[\popo]_{(i,j)}-\dim I(Z)_{(i,j)}$.

Consider a finite set of points $Z \subseteq \popo$ (regarded as a reduced
subscheme). We will say $Z$ has {\em generic} Hilbert function if
\[H_Z(i,j) = \min\{\dim R_{(i,j)}, |Z|\} = \min\{(i+1)(j+1), |Z|\}.\]
It is well known that points with generic Hilbert function are general;
i.e, for each $s\geq1$, there is a non-empty open subset of $U_s\subset (\popo)^s$
consisting of distinct ordered sets of $s$ points of $\popo$
with generic Hilbert function (see, for example, \cite{VTgen}).
In particular, subschemes $Z=P_1+\cdots+P_s$
consisting of $s$ distinct points for which
every subset of the points has generic Hilbert function
are general.

We will say that a set of $s$ distinct points $P_1,\ldots,P_s$ are
{\em multiplicity 1 generic} or are in {\em multiplicity 1 generic position} if for every
subscheme $Z=m_1P_1+\cdots+m_sP_s$ with $0\leq m_i\leq 1$, $Z$ has
generic Hilbert function. Thus being multiplicity 1 generic holds for general points.
Note that points $P_1,\ldots,P_s\in\popo$ being generic is not the
same as being multiplicity 1 generic. To explain, let
$\mathbb{K} \subseteq k$ be a subfield. Then there is a natural
inclusion $\pr^1_{\mathbb{K}}\subseteq \pr^1_k$, and we say that
$P_1,\ldots,P_s\in\pr^1_k\times\pr^1_k=(\popo)_k$ are {\em
generic} if $P_i\in (\popo)_{k_i}\setminus
(\popo)_{k_{i-1}}$ for each $i$, where
$k_0\subsetneq k_1\subsetneq \cdots\subsetneq k_s= k$
is a tower of algebraically closed fields
such that $k_0$ is the algebraic closure $\overline{k'}$ of the prime field
$k'$ of $k$. Thus for example, if
$C\subset\pr^2$ is an irreducible reduced cubic with a double
point, and if we pick points $p_1,\ldots,  p_8\in C$ such that no
three are collinear and no six lie on a conic but such that $p_1$
is the double point, then the points are multiplicity 1 generic but not
generic. On the other hand, $s$ generic points are multiplicity 1 generic.

\begin{example}
Any single point of $\popo$ is in multiplicity 1 generic position.
Two points of $\popo$ are in multiplicity 1 generic position if and only if
they are not both on the same horizontal or vertical rule of
$\popo$. As a consequence, if $s\geq 3$ points are in
multiplicity 1 generic position, then no two of them lie on the same
horizontal or vertical rule. For $s=3$, the converse is also true
(since any such three points are equivalent under an isomorphism of
$\popo$), but for $s\geq4$ points the condition that
no two lie on the same horizontal or vertical rule
is not sufficient to ensure that the points are in multiplicity 1 generic position.
(This is because given three points in multiplicity 1 generic position, there is,
up to multiplication by scalars, a unique form of degree
$(1,1)$ which vanishes on the three points. In order for four points
to be in multiplicity 1 generic position, the fourth point cannot be in the zero-locus
of the $(1,1)$-form associated to the other three points.)
\end{example}

\subsection{Divisors on blow ups and a connection to $\pr^2$}
Given a finite set of distinct points $P_1,\ldots,P_s\in\popo$,
let $\pi:X\to \popo$ be the birational morphism obtained by
blowing up the points $P_i$. Let $\operatorname{Cl}(X)$ be the divisor
class group of $X$. Let $H$ and $V$ be the pullback to $X$ of
general members of the rulings on $\popo$ (horizontal and
vertical, respectively), and for each point $P_i$ let $E_i$ be the
exceptional divisor of the blow up of $P_i$. Every divisor is
linearly equivalent to a unique divisor of the form
$aH+bV-m_1E_1-\cdots -m_sE_s$. Because of this, we can regard ${\rm
Cl}(X)$ as the free abelian group on the set
$\{H,V,E_1,\ldots,E_s\}$. This basis is called an {\em exceptional
configuration}. In particular, when we have a divisor of the form
$aH+bV-m_1E_1-\cdots- m_sE_s$, we will leave it to context whether
we really mean a divisor or its linear equivalence class in ${\rm
Cl}(X)$. We also recall that the intersection form on ${\rm
Cl}(X)$ is determined by $H\cdot E_i=V\cdot E_i=H^2=V^2=
E_i \cdot E_j = 0$ for all $i\neq j$, and $-H\cdot V = E^2_i = -1$ for $i>0$.

Given a divisor $F$ on $X$, it will be convenient to write $h^i(X,
F)$ in place of $h^i(X, \mathcal O_X(F))$, and we will refer to a
divisor class as being {\em effective\/} if it is the class of an
effective divisor. We also sometimes say by ellipsis that a divisor
is effective when we mean only that it is linearly equivalent to an effective divisor.
(If we were ever to mean that a divisor is actually effective and not just linearly equivalent to
an effective divisor, we would say the divisor is strictly effective.)
We denote the subsemigroup of classes of
effective divisors by $\EFF(X)\subseteq \operatorname{Cl}(X)$.  We recall
that a divisor or divisor class $D$ is {\em nef} if $D\cdot C\geq 0$
for every effective divisor $C$, and we denote the subsemigroup of classes of
nef divisors by $\NEF(X)\subseteq \operatorname{Cl}(X)$.

Problems involving fat points $Z=\sum_im_iP_i$ with support at
distinct points $P_i\in\popo$ can be translated into problems
involving divisors on $X$. Given $I=I(Z)$ and $(i,j)$, then as a
vector space $I(Z)_{(i,j)}$ can be identified with
$H^0(X,iH+jV-\sum_im_iE_i)$, which itself can be regarded as a
vector subspace of the space of sections $H^0(\popo,\mathcal O
_{\popo}(i,j))$. Thus given $(i,j)$, it is convenient to define the
divisor $F(Z,(i,j))=iH+jV-\sum_im_iE_i$, in which case we have,
under the identifications above,
\[I(Z)=\bigoplus_{i,j}I(Z)_{(i,j)}
=\bigoplus_{i,j}H^0(X, F(Z,(i,j))).\]

\begin{remark}\label{relation}
It can be useful to reinterpret problems involving points of $\popo$
as problems involving points of $\pr^2$. Let $Y$ be a finite set of points
 $p_1,\ldots,p_s$ of $\pr^2$. Let $Z$ be the image of $Y$
under the birational transformation from $\pr^2$ to $\popo$ given by blowing
up two points $p_{s+1},p_{s+2}\in \pr^2$ such that none
of the points $p_i$, $i<s+1$ is on the line $A$ through $p_{s+1}$ and $p_{s+2}$
and blowing down the proper transform $E$ of $A$.
The divisors $L,E_1,\ldots,E_{s+2}$, where $L$ is a line and $E_i$ is the exceptional
curve obtained by blowing up the point $p_i$, give a basis of the divisor class group
$\operatorname{Cl}(X)$ for the surface $X$ obtained by blowing up the
points $p_i$, also called an exceptional
configuration. The birational transformation from $\pr^2$ to $\popo$
described above induces a birational morphism $X\to \popo$
given by contracting $E_1,\ldots,E_s, L-E_{s+1}-E_{s+2}$. We also
have an exceptional configuration on $X$ coming from blowing up points
$P_0,P_1,\ldots,P_s\in\popo$ to obtain $X$; this basis is given by
$H=L-E_{s+1},V=L-E_{s+2},E_1,\ldots,E_s,E=L-E_{s+1}-E_{s+2}$
where $H$ and $V$ give the rulings on $\popo$.
We can identify $P_i$ with $p_i$ for $i=1,\ldots,s$; $P_0$ is the point obtained by
contracting the proper transform of the line through $p_{s+1}$ and $p_{s+2}$.
Thus $H^0(X,aH+bV-m(E_1+\cdots+E_s))= H^0(X,
(a+b)L-m(E_1+\cdots+E_n)-aE_{s+1}-bE_{s+2})$.
If $I$ is the ideal of the fat points $mP_1+\cdots+mP_s$, we note that
$\alpha(I^{(m)})$ is then the least $t$ such that $t=a+b$ and $h^0(X,
(a+b)L-m(E_1+\cdots+E_s)-aE_{s+1}-bE_{s+2})>0$.

Alternatively, suppose $P_1,\ldots,P_s\in\popo$ are such that no two of the points $P_i$
lie on the same horizontal or vertical rule. Let $X\to\popo$ be the birational morphism obtained
by blowing up the points $P_i$. Then there is also a birational morphism $X\to\pr^2$.
If $H,V,E_1,\ldots,E_s$ is the exceptional configuration for $X\to\popo$, the exceptional configuration
for $X\to\pr^2$ can be taken to be $L=H+V-E_s$, $E_1'=E_1,\ldots,E_{s-1}'=E_{s-1}$,
$E_s'=H-E_s$ and $E_{s+1}'=V-E_s$.
\end{remark}

\begin{lemma}\label{exccurvesLemma}
Let $P_1,\ldots,P_s\in\popo$ be distinct points
and let $X\to\popo$ be the birational morphism obtained
by blowing these points up.
Then a divisor $C\subset X$ is a prime divisor with $C^2<0$
if and only if $C^2=C\cdot K_X=-1$ for any $s\leq 8$ generic points
and also for general sets of $s\leq 7$ points.
If $s\leq7$, then in terms of the exceptional configuration for $X\to\popo$
the classes of these curves $C$ are (up to permutations of the $E_i$ and swapping
$H$ and $V$) precisely
\begin{enumerate}
\item[] $E_1$,
\item[] $H-E_1$,
\item[] $H+V-E_1-E_2-E_3$,
\item[] $2H+V-E_1-\cdots-E_5$,
\item[] $2H+2V-2E_1-E_2-\cdots-E_6$,
\item[] $3H+V-E_1-\cdots-E_7$,
\item[] $3H+2V-2E_1-2E_2-E_3-\cdots-E_7$,
\item[] $3H+3V-2E_1-\cdots-2E_4-E_5\cdots-E_7$,
\item[] $4H+3V-2E_1-\cdots-2E_6-E_7$, and
\item[] $4H+4V-3E_1-2E_2-\cdots-2E_7$.
\end{enumerate}
\end{lemma}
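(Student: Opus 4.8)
The plan is to combine adjunction with the (weak) del Pezzo geometry of $X$. In the exceptional configuration $H,V,E_1,\dots,E_s$ for $X\to\popo$ one computes $K_X=-2H-2V+E_1+\cdots+E_s$, so $-K_X=2H+2V-E_1-\cdots-E_s$ and $K_X^2=8-s$; concretely, $C=aH+bV-\sum_i m_iE_i$ satisfies $C^2=2ab-\sum_i m_i^2$ and $C\cdot K_X=-2a-2b+\sum_i m_i$. The single geometric input I would isolate is that $-K_X$ is ample for general points with $s\le 7$ (so $X$ is a genuine del Pezzo surface), and is nef with $(-K_X)^2=0$ for generic points with $s=8$. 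Both follow from Remark~\ref{relation}, which identifies $X$ with the blow up of $\pr^2$ at $s+1$ points: this is a del Pezzo surface exactly when the points are general and $s+1\le 8$, and is the extremal (rational elliptic) case when $s+1=9$.

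For the forward implication, let $C$ be a prime divisor with $C^2<0$ and use adjunction $2p_a(C)-2=C^2+C\cdot K_X$ with $p_a(C)\ge 0$. Nefness of $-K_X$ gives $C\cdot K_X\le 0$. If $C\cdot K_X\le -2$ then $C^2=2p_a(C)-2-C\cdot K_X\ge 0$, contradicting $C^2<0$; and $C\cdot K_X=-1$ forces $C^2=2p_a(C)-1\ge -1$, so $C^2=-1$ and $C$ is a $(-1)$-curve. The one remaining possibility is $C\cdot K_X=0$, which by adjunction would make $C$ a $(-2)$-curve. When $s\le 7$ this is impossible because $-K_X$ is ample, so a nonzero effective divisor $D$ has $D\cdot(-K_X)>0$; when $s=8$ I would exclude it using the genericity hypothesis, since any effective $(-2)$-class would impose a proper closed condition on the $s+1$ points of $\pr^2$ (collinearity, conconicity, or a root of the $E_8$-lattice becoming effective) that generic points do not satisfy.

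For the converse, suppose $C^2=C\cdot K_X=-1$. Riemann--Roch gives $\chi(\mathcal{O}_X(C))=1+\frac{1}{2}(C^2-C\cdot K_X)=1$, while $h^2(C)=h^0(K_X-C)=0$ since $(K_X-C)\cdot(-K_X)=-(9-s)<0$ and $-K_X$ is nef; hence $C$ is effective. Decomposing the effective representative into primes and pairing with $-K_X$, the relation $C\cdot(-K_X)=1$ shows that exactly one component $C_0$ meets $-K_X$ in degree $1$ while all others meet it in degree $0$. For $s\le 7$ ampleness eliminates the degree-$0$ components and $C=C_0$ is irreducible; for $s=8$ the degree-$0$ components are fiber classes of the anticanonical pencil, and once $(-2)$-curves have been excluded a short computation with $(-K_X)^2=0$ forces $C_0^2=-1$ and $C=C_0$. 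This gives the equivalence in both regimes. For the explicit list when $s\le 7$ I would enumerate the integer solutions of $2ab-\sum_i m_i^2=-1$ and $2a+2b-\sum_i m_i=1$; ampleness of $-K_X$ bounds $a,b,m_i$, so this is a finite search, and grouping the solutions up to permutations of the $E_i$ and the $H\leftrightarrow V$ symmetry yields precisely the ten displayed classes.

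Completeness of the list is most cleanly seen from the fact that the $(-1)$-classes form a single orbit under the Weyl group of the root system of $X$ (of type $A_1$, $A_1\times A_2$, $A_4$, $D_5$, $E_6$, $E_7$, $E_8$ for $s=1,\dots,7$): generating the orbit from $E_s$ by reflections and reducing by the visible symmetry reproduces the ten classes, and the counts (for instance $240$ classes when $s=7$) confirm that nothing is omitted. The main obstacle is the case $s=8$: because $-K_X$ is only nef and not big ($(-K_X)^2=0$), $X$ carries infinitely many $(-1)$-curves, and the requirement that every one of them be irreducible is a countable intersection of open conditions rather than a single open condition. This is exactly why the statement must demand generic (rather than merely general) points at $s=8$, whereas the del Pezzo range $s\le 7$ is governed by finitely many open conditions and follows routinely once adjunction and the Weyl-orbit description are in hand.
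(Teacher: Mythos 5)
Your proposal is correct in substance and shares the paper's analytic skeleton: adjunction against the nef anticanonical class to force $C^2\in\{-1,-2\}$ for a prime $C$ with $C^2<0$, Riemann--Roch together with $h^2(X,C)=h^0(X,K_X-C)=0$ to get effectivity of a $(-1)$-class, and the decomposition $C=C_0+rD$ with $C_0^2=-1-2r$ to force irreducibility when $s=8$. It diverges from the paper at two genuine points. First, to exclude $(-2)$-curves for $s\le 7$ you invoke ampleness of $-K_X$ (the del Pezzo property of the blow-up of $\pr^2$ at $s+1\le 8$ general points), whereas the paper uses only the weaker fact that $-K_X$ is represented by a smooth plane cubic through the $s+1$ points, hence nef, and then cites \cite[Proposition 4.1]{GHM} for the finiteness of $(-2)$-classes and their non-effectivity at general points. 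Your route is cleaner to state, but note that proving ampleness (say via Nakai--Moishezon) amounts to exactly the finitely-many-closed-conditions verification the paper makes explicit, so you are importing that step rather than avoiding it; it is standard enough that this is fine. Second, for the enumeration at $s\le 7$ the paper blows up an auxiliary point $P_{s+1}$ and classifies $C-E_{s+1}$ among the roots of the negative (semi-)definite lattice $K_Y^\perp$, while you use transitivity of the Weyl group on $(-1)$-classes together with orbit counts; both are legitimate finite searches, and your count check is a genuine verification (the ten listed classes have orbit sizes $7+14+35+42+42+2+42+35+14+7=240$ under permutations of the $E_i$ and $H\leftrightarrow V$, matching the $E_8$ count). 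One attribution to fix: the finiteness of the search comes from negative definiteness of $K_X^\perp$ when $K_X^2=8-s>0$, which is the paper's mechanism, not from ampleness per se.

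Two small slips, neither fatal. At $s=8$ with generic points there is no ``anticanonical pencil'': $h^0(X,-K_X)=1$, and the correct statement (which the paper proves) is that the unique anticanonical curve $D$ is the only prime divisor $F$ with $F\cdot D=0$; your argument goes through verbatim once ``fiber classes of the anticanonical pencil'' is replaced by ``multiples of $D$.'' Relatedly, your assertion that each effective root (of the affine lattice $\tilde E_8=K_X^\perp$, not $E_8$, for nine points of $\pr^2$) imposes a proper closed condition needs the justification the paper supplies via the group law on the cubic --- restricting the root class to $D$ gives a nonzero element of $\operatorname{Pic}^0(D)$ for points independent in the group law --- though your level of brevity here matches the paper's own one-line remark.
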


\begin{proof}
Since $s\leq 8$ and the points are either general or generic,
we can regard $X\to\pr^2$ as being the blow up of
$s+1\leq9$ points $p_1,\ldots,p_{s+1}$ in $\pr^2$, and that there is
a smooth cubic curve $D\subset\pr^2$ passing through  these points.
Thus up to linear equivalence
we have $D=-K_X=3L-E_1'-\cdots-E_s'$ with respect to the exceptional configuration
$L,E_1',\ldots,E_{s+1}'$ of the morphism $X\to \pr^2$. Since $D$ is irreducible with $D^2\geq0$,
$D$ is nef, so for any prime divisor $C$ we have $D\cdot C\geq0$.
By the adjunction formula $C^2-C\cdot D=2p_C-2$ we see $C^2\geq -2$, with
$C\cdot D=1$ if $C^2=-1$ and $C\cdot D=0$ if $C^2=-2$.

There are only finitely many possible classes of reduced, irreducible curves $C$ with
$C\cdot D=0$ when $s\leq7$ (see \cite[Proposition 4.1]{GHM}). For each of these classes,
$C$ is not effective if the points $p_i$ are general, so in fact
no such $C$ is effective if $s\leq 7$ and the points $p_i$ are general.
(For example, $(L-E_1'-E_2'-E_3')\cdot D=0$; if $L-E_1'-E_2'-E_3'$ is the class
of a strictly effective divisor $C$, then the points $p_1,p_2,p_3$ are collinear
and hence not general.)
For $s=8$ there are infinitely many possible such classes
so it is not enough to assume the points are general, but if the points
are generic then there are no prime divisors $C\neq D$ with $C\cdot D=0$
(since $C\cdot D=0$ implies the coordinates of the points satisfy an algebraic relation
coming from the group law on $D$).
Thus the only prime divisors $C$ with $C^2<0$ are those that satisfy $C^2=C\cdot K_X=-1$.
Conversely, if $C$ is a divisor with $C^2=C\cdot K_X=-1$,
then by Serre duality $h^2(X, C)=h^0(X, -D-C)$ but $h^0(X,-D-C)=0$
since $D\cdot(-D-C)<0$. Now by Riemann-Roch for surfaces
we have $h^0(X, C)-h^1(X,C)=1+(C^2+D\cdot C)/2=1$
so $C$ is effective. Up to linear equivalence, if $F$ is a prime divisor
with $F\cdot D=0$, then $F=D$ (otherwise, as above, we would get an algebraic
condition on the points $p_i$) and so $D^2=0$ (hence $s=8$).
Now if $C$ is not a prime divisor, then from $D\cdot C=1$
it follows that $C=G+rD$ with $r>0$ and $D^2=0$, where $G$ is the unique component
of $C$ with $D\cdot G=1$. But then $G^2=(C-rD)^2=-1-2r<-1$, contrary to
what is proved above.

Finally, suppose $s\leq 7$. Let $C$ be a prime divisor on $X$
with $C^2=C\cdot K_X=-1$. Let $Y$ be the surface obtained
by blowing up an arbitrary point $P_{s+1}\in \popo$.
Then denoting the pullback of $C$ to $Y$ also by $C$ we have
$(C-E_{s+1})\cdot K_Y=0$ and $(C-E_{s+1})^2=-2$.
It is not hard to check that the subgroup $K_Y^\perp$ of
classes orthogonal to $K_Y$ is, for $s<7$, negative definite,
and, if $s=7$, negative semi-definite with the only classes $F$
having $F\cdot K_Y=F^2=0$ being the multiples of $K_Y$.
Thus for $s<7$ it follows by negative definiteness that there are only finitely many classes
$C$ with $(C-E_{s+1})\cdot K_Y=0$ and $(C-E_{s+1})^2=-2$
and it is not hard to find them all.
For $s=7$, the quotient $K_Y^\perp/\langle K_Y\rangle$
is negative definite so, modulo $K_Y$, there are only finitely many classes
$C$ with $(C-E_{s+1})\cdot K_Y=0$ and $(C-E_{s+1})^2=-2$.
But $C$ must satisfy $C\cdot K_Y=-1$ and $C^2=-1$,
so there is at most one such representative in each coset of
$K_Y^\perp/\langle K_Y\rangle$. Again it is not hard to find all $C$.
\end{proof}

Note that a prime divisor
$C$ with $C^2=C\cdot K_X=-1$ is called an {\it exceptional curve}.
Exceptional curves are smooth rational curves.

\begin{lemma}\label{gammaLemma}
Let $P_1,\ldots,P_s\in\popo$ be distinct points,
$I\subset k[\popo]$ the ideal generated by all
bi-homogeneous forms that vanish at all of the points $P_i$.
Let $X$ be the blow up of these $s$ points of $\popo$,
with exceptional configuration $H,V,E_1,\ldots,E_s$.
If for some $\lambda$ and $m$ we have an effective divisor
$C= \lambda(H+V)-m(E_1+\cdots+E_s)$, then
$$\gamma(I)\leq\frac{2 \lambda}{m}.$$
If moreover for some $t$ and $r$ we have a nef divisor
$D=t(H+V)-r(E_1+\cdots+E_s)$ with
$C\cdot D=0$, then
$$\gamma(I)=\frac{2 \lambda}{m}=\frac{sr}{t}.$$
\end{lemma}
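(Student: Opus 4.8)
The plan is to extract an upper bound and a lower bound for $\gamma(I)$ from the two divisors and to use the hypothesis $C\cdot D=0$ to force them to coincide. Throughout I use the dictionary set up in the previous subsection: a nonzero section of $aH+bV-m(E_1+\cdots+E_s)$ is exactly a nonzero form of bidegree $(a,b)$, hence of total degree $a+b$, vanishing to order $m$ at each $P_i$. Since $I^{(m)}$ is the ideal of the fat point scheme $mP_1+\cdots+mP_s$, this means $\alpha(I^{(m)})$ equals the least value of $a+b$ for which $aH+bV-m(E_1+\cdots+E_s)$ is effective. I will also record at the outset the two intersection numbers that drive everything, using $H^2=V^2=E_i\cdot E_j=0$ for $i\neq j$ and $H\cdot V=1=-E_i^2$.

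For the upper bound I would argue as follows. Since $C=\lambda H+\lambda V-m\sum_iE_i$ is effective, it exhibits a nonzero form of total degree $2\lambda$ in $I^{(m)}$, so $\alpha(I^{(m)})\le 2\lambda$. Then I invoke subadditivity: from $I^{(p)}I^{(q)}\subseteq I^{(p+q)}$ we get $\alpha(I^{(p+q)})\le\alpha(I^{(p)})+\alpha(I^{(q)})$, so by Fekete's lemma the limit defining $\gamma(I)$ equals $\inf_{n}\alpha(I^{(n)})/n$. In particular $\gamma(I)\le\alpha(I^{(m)})/m\le 2\lambda/m$, which is the first assertion.

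For the lower bound, fix $n>0$ and let $F=aH+bV-n\sum_iE_i$ be any effective divisor computing $\alpha(I^{(n)})$, so $a+b=\alpha(I^{(n)})$. Since $D$ is nef and $F$ is effective, $D\cdot F\ge 0$. Expanding, all mixed terms pairing $H,V$ against the $E_i$ vanish, leaving $D\cdot F=t(H+V)\cdot(aH+bV)+nr\big(\sum_iE_i\big)^2=t(a+b)-nrs$. Hence $a+b\ge nrs/t$, so $\alpha(I^{(n)})/n\ge rs/t$ for every $n$, and therefore $\gamma(I)\ge rs/t$.

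Finally I would compute $C\cdot D=\lambda t\,(H+V)^2+mr\big(\sum_iE_i\big)^2=2\lambda t-mrs$, so the hypothesis $C\cdot D=0$ is precisely $2\lambda/m=rs/t$. Thus the upper bound $2\lambda/m$ and the lower bound $rs/t$ agree, giving $\gamma(I)=2\lambda/m=rs/t$. I expect the only genuinely delicate point to be the passage from a single effective divisor $C$ to the asymptotic invariant $\gamma(I)$: it rests on knowing that the limit defining $\gamma(I)$ is in fact an infimum, i.e.\ on the subadditivity of $\alpha(I^{(\bullet)})$. The intersection-theoretic bookkeeping, while it must be done carefully to keep the signs straight, is otherwise routine.
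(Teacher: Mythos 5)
Your proof is correct and follows essentially the same route as the paper's: the effective divisor $C$ yields the upper bound, pairing the nef divisor $D$ against an effective divisor computing $\alpha(I^{(n)})$ yields the lower bound $rs/t$, and the intersection computation $C\cdot D=2\lambda t-mrs=0$ forces the two bounds to coincide. The only cosmetic difference is that you invoke subadditivity and Fekete's lemma to write $\gamma(I)=\inf_n \alpha(I^{(n)})/n$, whereas the paper considers the multiples $lC$ to bound $\alpha(I^{(lm)})/(lm)$ and appeals to the known existence of the limit along the subsequence $n=lm$.
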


\begin{proof}
If $C$ is effective, so is $lC$ and
thus $\alpha(I^{(lm)})\leq 2 \lambda l$ for all $l\geq 1$
and therefore
$$\frac{\alpha(I^{(lm)})}{lm}\leq \frac{2 \lambda l}{lm}=\frac{2 \lambda}{m}.$$

Now assume $D$ is nef.
From $C\cdot D=0$ we get
$$\frac{2 \lambda}{m}=\frac{sr}{t}.$$
Now, given $\alpha(I^{(j)})$, we can find $a\geq0$ and $b\geq0$ with
$\alpha(I^{(j)})=a+b$ such that $(I^{(j)})_{(a,b)}\neq0$.
Moreover, $C'=aH+bV-r(E_1+\cdots+E_s)$ is effective so
$C'\cdot D=t(a+b)-jrs\geq0$, hence
$$\frac{\alpha(I^{(j)})}{j}\geq \frac{rs}{t}$$
and therefore
$$\frac{rs}{t}\leq\frac{\alpha(I^{(lm)})}{lm}\leq \frac{2 \lambda l}{lm}=\frac{rs}{t}.$$
Taking the limit as $l\to\infty$ gives the conclusion.
\end{proof}

We now give the proof of Corollary \ref{boundsCor}.

\begin{proof}[{Proof of Corollary \ref{boundsCor}}]
Let $X$ be the blow up of $\popo$ at the $s$ points with exceptional configuration
$H,V,E_1,\ldots,E_s$.

The case $s=1$ follows from Proposition \ref{completeintersections} since in this case $\alpha(I)=1$,
so consider $s=2$. Then $C=D=H+V-E_1-E_2$ is effective
(since $C=(H-E_1)+(V-E_2)$ is a sum of effective divisors) and nef
(since $D=(H-E_1)+(V-E_2)$ is a sum of prime divisors, each of which
$D$ meets non-negatively). Since $C\cdot D=0$, we have $\gamma(I)=2$
by Lemma \ref{gammaLemma}.

Consider $s=3$. Then $C=H+V-E_1-E_2-E_3$ is effective (being exceptional,
by Lemma \ref{exccurvesLemma}) and
$D=3H+3V-2(E_1+E_2+E_3)=H+V+2C$ is nef with $C\cdot D=0$ so $\gamma(I)=2$.

Consider $s=4$. Then $C=4(H+V)-3(E_1+E_2+E_3+E_4)=
C_1+C_2+C_3+C_4$ is effective (being the sum of the four exceptional
curves $C_i$, where $C_i=(H+V-E_1-E_2-E_3-E_4)+E_i$) and
$D=3H+3V-2(E_1+E_2+E_3+E_4)=2C_4+(H-E_4)+(V-E_4)$ is nef with $C\cdot D=0$
so $\gamma(I)=8/3$.

Consider $s=5$. Then $C=3(H+V)-2(E_1+\cdots+E_5)=(2H+V-E_1-\cdots-E_5)+(H+2V-E_1-\cdots-E_5)$
is effective (being the sum of two exceptional curves) and
$D=10(H+V)-6(E_1+\cdots+E_5)=D_1+\cdots+D_5$ is nef (since each
$D_i=2H+2V-(E_1+\cdots+E_5)-E_i$ is a sum of two exceptionals, each of which
$D$ meets non-negatively; for example, $D_1=(H+V-E_1-E_2-E_3)+(H+V-E_1-E_4-E_5)$).
Since  $C\cdot D=0$ we have $\gamma(I)=3$.

Consider $s=6$. Then $C=12(H+V)-7(E_1+\cdots+E_6)=C_1+\cdots+C_6$
is effective (since each $C_i=2(H+V)-(E_1+\cdots+E_6)-E_i$ is exceptional)
and $D=7(H+V)-4(E_1+\cdots+E_6)=(4H+3V-2(E_1+\cdots+E_6))+(3H+4V-2(E_1+\cdots+E_6))$ is nef
(since $4H+3V-2(E_1+\cdots+E_6)=(2H+V-(E_1+\cdots+E_5))+(2(H+V)-(E_1+\cdots+E_5)-2E_6)$
is a sum of two exceptional curves, and likewise for $(3H+4V-2(E_1+\cdots+E_6))$,
each of which $D$ meets non-negatively).
Since  $C\cdot D=0$ we have $\gamma(I)=24/7$.

Consider $s=7$. Then $C=28(H+V)-15(E_1+\cdots+E_7)=C_1+\cdots+C_7$
is effective (since each $C_i=4(H+V)-2(E_1+\cdots+E_7)-E_i$ is
exceptional) and $D=15(H+V)-8(E_1+\cdots+E_7)=H+V+D_1+\cdots+D_7$
is nef (since
$4D=2C+(3H+V-(E_1+\cdots+E_7))+(H+3V-(E_1+\cdots+E_7))$ is a sum
of exceptionals, each of which $D$ meets non-negatively). Since
$C\cdot D=0$ we have $\gamma(I)=56/15$.

Consider $s=8$. In this case
$C=D=2(H+V)-(E_1+\cdots+E_8)=-K_X$ is effective, since 8 points
impose at most 8 conditions on the 9 dimensional space of forms of
degree $(2,2)$. Since the blow up $X$ of $\popo$ at 8 general points
is a blow up of $\pr^2$ at 9 general points, and since there is an irreducible cubic
through 9 general points of $\pr^2$, we see that $-K_X$ is nef. Since $C\cdot
D=0$, we have $\gamma(I)=4$.

Now assume $s\geq 9$.
Let $C=d(H+V)-m(E_1+\cdots+E_s)$. If $C^2>0$, then $tC$ is effective for $t\gg0$,
so by Lemma \ref{gammaLemma} we have $\gamma(I)\leq 2d/m$.
It follows that $\gamma(I)\leq \sqrt{2s}$.
It is easy to compute $\alpha(I)$ for any given $s$.
In fact, since the points are general, they impose independent conditions
on forms of every bi-degree $(i,j)$; i.e., there are forms
of bi-degree $(i,j)$ vanishing at the $s$ points if and only if $(i+1)(j+1)> s$.
But for a given degree $t=i+j$, the maximum value of $(i+1)(j+1)$ occurs when
$i=j$, and so there are no forms in $I$ of total degree $t$ if
$(t/2+1)^2\leq s$. But  $(t/2+1)^2\leq s$ is equivalent to $t\leq 2(\sqrt{s}-1)$.
Thus $\alpha(I)>2(\sqrt{s}-1)$, hence we get $\sqrt{s}-1<\gamma(I)$
from the bound given in \cite[Section 2]{GHVT}.
\end{proof}


\renewcommand{\thetheorem}{\thesubsection.\arabic{theorem}}
\setcounter{theorem}{0}

\section{Additional results for general points of $\popo$}\label{sect3}
In this section, we consider the problem of whether $I^m=I^{(m)}$ for all $m$
when $I$ is the ideal of $s$ general points of $\popo$.
For $s=1,2,3,5$, we verify $I^m=I^{(m)}$ for all $m$.
For $s \geq 6$, we prove that $I^2\neq I^{(2)}$.
For $s=4$, computer calculations suggest that $I^2=I^{(2)}$,
but we show that $I^3\neq I^{(3)}$.

\subsection{Equality of $I^{(m)}$ and $I^m$}
We first consider the case of a set of two points $Z \subseteq \popo$
in multiplicity 1 generic position.
For this case, the problem reduces to a question of monomial ideals.

\begin{theorem}\label{2pts}
Let $I = I(Z)$ where $Z \subseteq \popo$ consists of
two points in multiplicity 1 generic position.
Then $I^{(m)}=I^m$ for all $m\geq 1$.
\end{theorem}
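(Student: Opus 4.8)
The plan is to reduce the statement to a purely combinatorial comparison of two monomial ideals. By the Example following the definition of multiplicity 1 generic position, two such points have distinct first coordinates and distinct second coordinates, so after applying a suitable automorphism of $\popo$ — equivalently, the induced bigraded automorphism of $R=k[\popo]$ — I may assume $P=[1:0]\times[1:0]$ and $Q=[0:1]\times[0:1]$. Such an automorphism carries ideals of points to ideals of points and commutes with intersection, ordinary powers, and (multihomogeneous) primary decomposition, so $I^{(m)}=I^m$ holds for $Z$ if and only if it holds for this normalized pair. For the normalized pair one computes $I(P)=(x_1,y_1)$, $I(Q)=(x_0,y_0)$, and hence $I=I(P)\cap I(Q)=(x_0x_1,\,x_1y_0,\,x_0y_1,\,y_0y_1)$, a squarefree monomial ideal (in fact the edge ideal of a $4$-cycle on the four variables).

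Since $I$ and each $I(P_i)$ are monomial, so are $I^m$ and $I^{(m)}=I(P)^m\cap I(Q)^m$, and it therefore suffices to compare their sets of monomials. For $\mu=x_0^{a_0}x_1^{a_1}y_0^{b_0}y_1^{b_1}$ I would record the elementary membership criteria $\mu\in(x_1,y_1)^m\iff a_1+b_1\ge m$ and $\mu\in(x_0,y_0)^m\iff a_0+b_0\ge m$, which give
\[
\mu\in I^{(m)}\iff a_0+b_0\ge m \text{ and } a_1+b_1\ge m .
\]
On the other side, $\mu\in I^m$ exactly when $\mu$ is divisible by a product of $m$ generators of $I$; writing $c_1,c_2,c_3,c_4\ge0$ for the multiplicities with which $x_0x_1,\,x_1y_0,\,x_0y_1,\,y_0y_1$ occur in such a product, this means there exist $c_i$ with $c_1+c_2+c_3+c_4=m$ and $c_1+c_3\le a_0$, $c_1+c_2\le a_1$, $c_2+c_4\le b_0$, $c_3+c_4\le b_1$. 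Summing these inequalities in pairs recovers $a_0+b_0\ge m$ and $a_1+b_1\ge m$, which reproves the general containment $I^m\subseteq I^{(m)}$ recalled in the background.

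The crux, and the step I expect to carry essentially all the content, is the reverse inclusion: given $\mu\in I^{(m)}$, produce the $c_i$. I would display the $c_i$ as the entries of a $2\times2$ nonnegative integer matrix whose row margins are the exponents $(a_0,b_0)$ of $x_0,y_0$ and whose column margins are the exponents $(a_1,b_1)$ of $x_1,y_1$. Choosing target row sums $r_1\le a_0$, $r_2\le b_0$ with $r_1+r_2=m$ (possible since $a_0+b_0\ge m$) and column sums $s_1\le a_1$, $s_2\le b_1$ with $s_1+s_2=m$, the existence of a nonnegative integer matrix with these prescribed margins is a standard transportation fact, realized explicitly by the northwest-corner rule; this yields the required $c_i$ and shows $\mu\in I^m$. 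Everything outside this feasibility argument is bookkeeping. (Alternatively one could invoke that the edge ideal of a bipartite graph is normally torsion-free and hence has equal symbolic and ordinary powers, but the direct transportation argument keeps the proof self-contained.)
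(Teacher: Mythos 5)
Your proof is correct, and it diverges from the paper at exactly one point: the paper's proof consists of your first step alone. After the same normalization (the paper takes $I(P_1)=(x_0,y_0)$ and $I(P_2)=(x_1,y_1)$), it cites Lemma 4.1 of \cite{GHVT} for the resulting monomial-ideal statement, so all of the combinatorial content is outsourced to that lemma. You instead prove it from scratch, and your argument checks out: since $I$, $I^m$, and $I^{(m)}=I(P)^m\cap I(Q)^m$ are monomial ideals, comparing monomials suffices; the membership criteria $\mu\in I^{(m)}\iff a_0+b_0\geq m$ and $a_1+b_1\geq m$, and $\mu\in I^m$ iff the system $c_1+c_2+c_3+c_4=m$, $c_1+c_3\leq a_0$, $c_1+c_2\leq a_1$, $c_2+c_4\leq b_0$, $c_3+c_4\leq b_1$ is feasible, are right; and the feasibility step is sound, since the margins $r_1=\min(a_0,m)$, $r_2=m-r_1\leq b_0$ (using $a_0+b_0\geq m$) and the analogous column margins have equal totals, whence the northwest-corner rule (e.g.\ $c_1=\min(r_1,s_1)$, then fill in the remaining entries, which stay nonnegative) produces the required matrix. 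As for what each route buys: the paper's citation is shorter, and the lemma it invokes is stated for ideals of this type more generally (it is the same statement that handles unions of disjoint linear subspaces in \cite{GHVT}), while your transportation argument is self-contained, characteristic-free, and exposes the real reason equality holds --- the integrality of the $2\times2$ transportation polytope, which is the concrete face of the fact you mention in passing, that the edge ideal of a bipartite graph (here the $4$-cycle $x_0x_1,\,x_1y_0,\,y_0y_1,\,y_1x_0$) is normally torsion-free. Your reduction-of-generality step (automorphisms of $\popo$ commute with powers, intersections, and multihomogeneous primary decomposition) is also stated with appropriate care, matching the paper's unexplained ``after a change of coordinates.''
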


\begin{proof} Let $Z = P_1 + P_2$.
We can assume, after a change of coordinates, that
$I(P_1) =(x_0,y_0)$ and $I(P_2) = (x_1,y_1)$.
We then apply \cite[Lemma 4.1]{GHVT} for the conclusion.
\end{proof}

We now consider three points in multiplicity 1 generic position.

\begin{theorem}\label{3pts}
Let $I = I(Z)$ where $Z \subseteq \popo$ consists of three points in multiplicity 1 generic position.
Then $I^{(m)}=I^m$ for all $m\geq 1$.
\end{theorem}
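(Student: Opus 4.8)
The plan is to reduce the statement to a comparison of Hilbert functions and then to treat the two sides separately. Since one always has $I^m\subseteq I^{(m)}$, and since $I^{(m)}=I(P_1)^m\cap I(P_2)^m\cap I(P_3)^m$ is an intersection of saturated ideals (each $I(P_i)=(F_i,G_i)$ is a complete intersection, so each $I(P_i)^m$ is unmixed, and it is exactly the $I(P_i)$-primary component of $I^m$ because $I$ is locally a complete intersection at each $P_i$), the symbolic power $I^{(m)}$ is precisely the saturation of $I^m$ with respect to the irrelevant ideal. Hence $I^m=I^{(m)}$ holds for all $m$ if and only if $\dim_k (I^m)_{(i,j)}=\dim_k (I^{(m)})_{(i,j)}$ in every bidegree $(i,j)$. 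By the example following the definition of multiplicity $1$ generic position, any three such points are carried to any other such triple by an automorphism of $\popo$, so I may fix the normalization $I(P_1)=(x_1,y_1)$, $I(P_2)=(x_0,y_0)$, $I(P_3)=(x_0-x_1,y_0-y_1)$.

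For the symbolic side I would pass to the blow up $\pi\colon X\to\popo$ of the three points, with exceptional configuration $H,V,E_1,E_2,E_3$, and use the identification $(I^{(m)})_{(i,j)}\cong H^0(X,\,iH+jV-m(E_1+E_2+E_3))$. The surface $X$ is smooth rational and, by Remark \ref{relation}, is also the blow up of $\pr^2$ at four general points, i.e.\ a del Pezzo surface. Riemann--Roch gives $\chi(\mathcal{O}_X(iH+jV-m(E_1+E_2+E_3)))=(i+1)(j+1)-3\binom{m+1}{2}$, and the plan is to show that for these classes the higher cohomology vanishes, so that $\dim_k (I^{(m)})_{(i,j)}=\max\{0,(i+1)(j+1)-3\binom{m+1}{2}\}$ outside a small, explicitly describable region. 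The vanishing of $h^1$ and $h^2$ is controlled by the finite list of negative (exceptional) curves on $X$ furnished by Lemma \ref{exccurvesLemma}: one writes the Zariski decomposition of the relevant class using these curves and a nef remainder, both of which have vanishing higher cohomology. Note that the unique $(1,1)$-form through the three points is here $Q=x_0y_1-x_1y_0$, whose proper transform is the exceptional curve $H+V-E_1-E_2-E_3$.

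For the ordinary side I would first record explicit minimal generators of $I$. The Hilbert function forces $I$ to be an almost complete intersection $I=(Q,A,B)$ with $\deg Q=(1,1)$, $\deg A=(2,1)$, $\deg B=(1,2)$; indeed the bidegrees $(2,1)$, $(2,2)$, $(3,1)$ show, using that $Q$ is irreducible and does not divide $A$ or $B$, that no further generators are needed. Then $I^m$ is generated by the products $Q^aA^bB^c$ with $a+b+c=m$, so $(I^m)_{(i,j)}$ is the span of all multiples of these in bidegree $(i,j)$. The plan is to show, bidegree by bidegree, that these products already span a subspace of dimension equal to the value of $\dim_k (I^{(m)})_{(i,j)}$ computed above; together with $I^m\subseteq I^{(m)}$ this forces equality in each bidegree and hence $I^m=I^{(m)}$.

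The main obstacle is the ordinary side, namely controlling $\dim_k (I^m)_{(i,j)}$ from below. Because three general points of $\popo$ are not arithmetically Cohen--Macaulay, $I$ has no Hilbert--Burch resolution and one cannot read off the Hilbert function of $I^m$ mechanically; instead one must understand the syzygies among $Q,A,B$ well enough to count the independent products in each bidegree. Proving that these products fill up $(I^{(m)})_{(i,j)}$ is precisely the assertion that $I^m$ acquires no embedded component at the irrelevant primes $(x_0,x_1)$, $(y_0,y_1)$, and $(x_0,x_1,y_0,y_1)$, which is the genuine content of the theorem. I expect the key technical step to be establishing the absence of embedded components at the two rule-primes $(x_0,x_1)$ and $(y_0,y_1)$, after which saturation with respect to the maximal irrelevant ideal should follow.
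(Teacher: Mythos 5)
Your reduction to a bidegree-by-bidegree comparison of Hilbert functions is sound (for points in $\popo$ the symbolic power is indeed the saturation of the ordinary power with respect to the bigraded irrelevant primes, by the argument you sketch), and the symbolic-side computation on the blow-up $X$ is feasible in principle. But there is a concrete error on the ordinary side: $I$ is \emph{not} the almost complete intersection $(Q,A,B)$. Every element of the ideal $(Q,A,B)$ has degree at least $1$ in the $y$-variables, yet $I_{(3,0)}\neq 0$: with your normalization the product $x_0x_1(x_0-x_1)$ of the three $(1,0)$-forms through the points lies in $I$, so $I$ has minimal generators in bidegrees $(3,0)$ and $(0,3)$ as well. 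Your inspection of the bidegrees $(2,1)$, $(2,2)$, $(3,1)$ cannot detect this, since it never looks at bidegrees $(i,0)$ or $(0,j)$. This is not a peripheral issue: with the wrong generating set, your count of products $Q^aA^bB^c$ comes out strictly too small in exactly the unbalanced bidegrees --- for instance $(I^{(m)})_{(3m,0)}$ is $1$-dimensional but contains no product of $Q$, $A$, $B$ --- so the claimed bidegree-wise equality would simply fail there. The paper's own proof leans on precisely this $(3,0)$-form: when $1\leq j<m$ it writes $iH+jV-m(E_1+E_2+E_3)$ with Zariski fixed part $(m-j)(3H-E_1-E_2-E_3)$, where $3H-E_1-E_2-E_3=(H-E_1)+(H-E_2)+(H-E_3)$ corresponds to that form, and factors it out of every section.

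Even after correcting the generating set, the proposal defers the genuine content of the theorem: you acknowledge that showing the products span a subspace of dimension $\dim_k(I^{(m)})_{(i,j)}$ is the ``main obstacle,'' but you supply no mechanism for proving it --- ``understand the syzygies among $Q,A,B$ well enough'' is a restatement of the problem, not a step toward its solution, and the five-generator ideal no longer has the almost-complete-intersection structure you hoped to exploit. For comparison, the paper proves $I^{(m)}\subseteq I^{(m-1)}I+FI^{(m-1)}$ and iterates: B\'ezout and fixed-component arguments dispose of the ranges $i+j<3m$ and $j<m$ (pulling out the $(1,1)$-form or the $(3,0)$-form respectively), and in the remaining range $i\geq j\geq m\geq 2$, $i+j\geq 3m$ it proves surjectivity of the multiplication map $H^0(X,G)\otimes H^0(X,L'')\to H^0(X,G+L'')$, where via a second birational morphism $X\to\pr^2$ (in the spirit of Remark \ref{relation}) the class $L''=2H+V-E_1-E_2-E_3$ becomes the pullback of a line on a blow-up of four general points, so that \cite[Proposition 2.4]{BH2} applies for nef $G$. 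Some tool of this kind is indispensable; as written, your argument sets up a correct framework (saturation, Riemann--Roch plus Zariski decomposition on the del Pezzo surface) but proves neither the structure of $I$ it asserts nor the surjectivity statement on which everything rests.
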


\begin{proof}
For specificity say that the three points are $P_i=P_{i1}\times P_{i2}$, $i=1,2,3$,
for points $P_{ij}\in\pr^1$ and that
$k[\popo]=k[a,b,c,d]=k[a,b]\otimes_kk[c,d]=k[\pr^1]\otimes k[\pr^1]$.
Up to change of coordinates, we may as well assume
$P_{11}=P_{12}=[0:1]$,
$P_{21}=P_{22}=[1:1]$, and
$P_{31}=P_{32}=[1:0]$.

Since the points are multiplicity 1 generic, we know $\dim I_{(1,1)}=1$ , so there is (up to scalar multiples)
a unique form $F$ of degree $(1,1)$ in $I$. We will show that $I^{(m)}\subseteq I^{(m-1)}I+FI^{(m-1)}$ for each
$m\geq2$. Formally, we can write the right hand side as $I^{(m-1)}(I+F)$. Iterating $m-1$ times gives
$I^{(m)}\subseteq I(I+F)^{m-1}=I^m+FI^{m-1}+\cdots +F^{m-1}I$. Since $F\in I$,
we see that $F^iI^{m-i}\subseteq I^m$, hence $I^{(m)}\subseteq I^m$. But $I^m\subseteq I^{(m)}$,
so we have $I^{(m)}=I^m$.

We now show $I^{(m)}\subseteq I^{(m-1)}I+FI^{(m-1)}$. This is clear if $m=1$,
so assume $m\geq2$. We will consider $(I^{(m)})_{(i,j)}$
for various cases. If $(I^{(m)})_{(i,j)}=0$, then clearly $I^{(m)}\subseteq I^{(m-1)}I+FI^{(m-1)}$
so we may assume $(I^{(m)})_{(i,j)}\neq0$.

If $i+j<3m$, then apply B\'ezout's theorem: for any element $G\in (I^{(m)})_{(i,j)}$
the sum of the intersection multiplicities of $F$ with $G$ over all points $P\in\popo$ is at least
$3m$ since $G$ vanishes at each point $P_i$ with order at least $m$ while $F$ vanishes with
order 1, so summing over the three points gives at least $3m$. But $G$ has degree $(i,j)$
and $F$ has degree $(1,1)$, so at most $i+j$ common zeros are possible unless $F$ divides $G$.
Since $i+j<3m$, we see $F$ divides $G$, say $G=FH$. Then $H$ has degree $(i-1,j-1)$ and vanishes
at least $m-1$ times at each of the three points (since $G$ vanishes at least $m$ times and
$F$ vanishes once at each point). Thus $H\in(I^{(m-1)})_{(i-1,j-1)}$, so
$(I^{(m)})_{(i,j)}\subseteq F(I^{(m-1)})_{(i-1,j-1)}\subset I^{(m-1)}I+FI^{(m-1)}$.

Hereafter assume $i+j\geq 3m$.
If $j=0$, then $(I^{(m)})_{(i,j)}$ is the space of polynomials
in $a$ and $b$ of degree $(i,0)$ divisible by $a^mb^m(a-b)^m$.
Thus $(I^{(m)})_{(i,j)}=(I_{(3,0)})^mI_{(i-3m,0)}$, hence
$(I^{(m)})_{(i,j)}\subseteq I^m\subseteq I^{(m-1)}I$.
Similarly, if $i=0$, swapping $c$ and $d$ for $a$ and $b$ we again have
$(I^{(m)})_{(i,j)}\subseteq I^m\subseteq I^{(m-1)}I$.

Now assume $i>0$ and $j>0$, in addition to $i+j\geq 3m$.
The cases $i\geq j$ and $j\geq i$ are symmetric, so assume $i\geq j$.
We work on the surface $X$ obtained by blowing up
the points $P_i$. We have the birational morphism
$\pi:X\to\popo$ with exceptional configuration $H,V,E_1,E_2,E_3$,
with respect to which we can identify $(I^{(m)})_{(i,j)}$ with
$H^0(X, iH+jV-(m-1)E)$, where $E=E_1+E_2+E_3$.

If $1\leq j<m$, then we can write
$iH+jV-mE=(i-3m+j)H+j(2H+V-E)+(m-j)(3H-E)$.
Note that $3H-E=(H-E_1)+(H-E_2)+(H-E_3)$ is a sum of three
disjoint exceptional curves, disjoint also from
$(i-3m+j)H$ and $j(2H+V-E)$. Thus
$(i-3m+j)H+j(2H+V-E)$ is the nef part (with $|(i-3m+j)H+j(2H+V-E)|$
non-empty and fixed component free) and
$(m-j)(3H-E)$ is the negative (and fixed) part of a Zariski decomposition
of $iH+jV-mE$. The unique element of $|3H-E|$
corresponds to an element $Q\in I_{(3,0)}$, and since $m-j>0$ and $|3H-E|$
is the fixed part of $|iH+jV-mE|$, $Q$ is a factor of every element
of $(I^{(m)})_{(i,j)}$. Since $Q$ vanishes with order 1 at each point $P_1,P_2,P_3$,
we have $(I^{(m)})_{(i,j)}=Q(I^{(m-1)})_{(i-3,j)}\subset I^{(m-1)}I$, as we wanted to show.

So now we may assume that $i\geq j\geq m>1$ and $i+j\geq 3m$.
We will show that
under multiplication we have a surjection
$\mu:(I^{(m-1)})_{(i-2,j-1)}\otimes_k(I)_{(2,1)}\to (I^{(m)})_{(i,j)}$
and hence $(I^{(m)})_{(i,j)}\subset I^{(m-1)}I$.
But surjectivity of $\mu$ is equivalent to surjectivity of
the corresponding map
$\lambda:H^0(X, (i-2)H+(j-1)V-(m-1)E)\otimes H^0(X,2H+V-E)\to H^0(X,iH+jV-mE)$.

Under our assumptions, we have $(i-m)+(j-m)\geq m$ and $i-m\geq j-m\geq0$,
so we can pick integers $0\leq s\leq r\leq i-m$ and $s\leq j-m$ such that
$r+s=m$. Thus $iH+jV-mE=r(2H+V-E)+s(H+2V-E)+(i-m-r)H+(j-m-s)V$, and moreover
$r\geq 1$ (since $r\geq m/2>0$). Note also that $|2H+V-E|$ is non-empty and
fixed component free (since we can write $2H+V-E$ as a sum of three exceptional curves
$(H-E_u)+(H-E_v)+(V-E_w)$ in three different ways using
various permutations of $\{u,v,w\}=\{1,2,3\}$,
showing that none of the curves occurring as summands
is a fixed component), and likewise for $H+2V-E$.
Since $|2H+V-E|$, $|H+2V-E|$, $|H|$ and $|V|$ are non-empty and
fixed component free, $2H+V-E$, $H+2V-E$, $H$ and $V$ are nef.
Since $r\geq 1$ and $m\geq2$,
$|(i-2)H+(j-1)V-(m-1)E|=|(r-1)(2H+V-E)+s(H+2V-E)+(i-m-r)H+(j-m-s)V|$
is also non-empty and fixed component free, so $(i-2)H+(j-1)V-(m-1)E$ is nef.

As discussed in Remark \ref{relation}, we have a birational morphism
$p:X\to\pr^2$ with exceptional configuration
$L'=H+V-E_3$, $E_1'=E_1$, $E_2'=E_2$, $E_3'=H-E_3$ and $E_4'=V-E_3$,
so $H=L'-E_4'$, $V=L'-E_3'$, $E_1=E_1'$, $E_2=E_2'$ and $E_3=L'-E_3'-E_4'$.
Let $p_1,\ldots,p_4\in\pr^2$ be the points such that $E'_l=p^{-1}(p_l)$.
Because the points $P_1,P_2,P_3$ are multiplicity 1 generic, no three of the points
$p_l$ are collinear. Thus the proper transform $E_{uv}'$ of the line
through the points $p_u$ and $p_v$ for $u\neq v$ is an exceptional curve
and by contracting $E_{14}'$, $E_{24}'$, $E_{12}'$ and $E_3'$ we get another birational
morphism $X\to \pr^2$ obtained by blowing up four distinct general points $p_u''$,
this one having exceptional configuration
$L''=2L'-E_1'-E_2'-E_4'$, $E_1''=E_{14}'$, $E_2''=E_{24}'$, $E_3''=E_{34}'$,
and $E_4''=E_3'$. Note that $2H+V-E=2L'-E_1'-E_2'-E_4'=L''$.

Thus $\lambda$ can be written as
$\lambda:H^0(X, G)\otimes H^0(X,L'')\to H^0(X,L''+G)$
where $G=(i-2)H+(j-1)V-(m-1)E$ is nef.
Since $X$ is the blow up of four points $p_u''$ and therefore
$|2L''-E_1''-E_2''-E_3''-E_4''|\neq\varnothing$,
it follows by \cite[Proposition 2.4]{BH2} that
$\lambda$ is surjective, as claimed.
\end{proof}

\begin{remark}
Li and Swanson have given a criterion under which a radical ideal
$I$ in a reduced Noetherian domain has the property that $I^{(m)}
= I^m$ for all $m \geq 1$; see \cite[Theorem 3.6]{LS}. It is
possible that the criterion applies for ideals of any sets of two,
three or five multiplicity 1 generic points of $\popo$ in any characteristic,
but it seems difficult to verify. However, for a specific choice
of ground field and a specific choice of points one can use {\it
Macaulay2} to check the criterion. Irena Swanson, for example,
shared with us such a {\it Macaulay2} script, which shows over
${\mathbb Q}$ that the ideal $I$ of a reduced set of three points
in multiplicity 1 generic position in $\popo$ does satisfy the conditions of
\cite[Theorem 3.6]{LS}, whence $I^{(m)} = I^m$ for all $m \geq 1$.
\end{remark}

Let $I$ be the ideal of five multiplicity 1 generic
points $P_1,\ldots,P_5\in\popo$. We will show that $I^{(m)}=I^m$ for all $m\geq 1$.
The basic argument is the same as we used for three points
in general position, but it is now more complicated.

\begin{theorem}\label{5pts}
Let $I = I(Z)$ with $Z \subseteq \popo$ be five multiplicity 1 generic points.
Then $I^{(m)}=I^m$ for all $m\geq 1$.
\end{theorem}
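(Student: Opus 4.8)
The plan is to mirror the proof of Theorem \ref{3pts}. Since $I^m\subseteq I^{(m)}$ always holds, it suffices to prove $I^{(m)}\subseteq I^{(m-1)}I$ for every $m\geq2$; iterating this inclusion gives $I^{(m)}\subseteq I^{(m-1)}I\subseteq I^{(m-2)}I^2\subseteq\cdots\subseteq I^m$. Let $X\to\popo$ be the blow-up of the five points, with exceptional configuration $H,V,E_1,\dots,E_5$ and $E=E_1+\cdots+E_5$, and identify $(I^{(m)})_{(i,j)}$ with $H^0(X,iH+jV-mE)$. By the symmetry exchanging the two rulings ($H\leftrightarrow V$) I may assume $i\geq j$, and I argue degree piece by degree piece, verifying $(I^{(m)})_{(i,j)}\subseteq (I^{(m-1)}I)_{(i,j)}$. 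The new feature relative to three points is that $X$ is now a del Pezzo surface of degree three (the blow-up of $\pr^2$ at six general points, Remark \ref{relation}), so the single $(1,1)$-form is replaced by the two distinguished exceptional curves $F_1=2H+V-E\in I_{(2,1)}$ and $F_2=H+2V-E\in I_{(1,2)}$, each passing simply through all five points, together with the anticanonical class $-K_X=2H+2V-E$, whose sections form $I_{(2,2)}\subseteq I$; the configuration of negative curves (those classified in Lemma \ref{exccurvesLemma}, twenty-seven in all) is correspondingly richer.

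First I would dispose of the boundary and low-degree pieces. If $j=0$, then $(I^{(m)})_{(i,0)}$ is the set of degree $(i,0)$ forms divisible by the $m$-th power of the degree $(5,0)$ form cutting out the five (distinct) first coordinates, hence lies in $I^m$. If $1\leq j<m$, then restricting any $G\in(I^{(m)})_{(i,j)}$ to the vertical ruling through each $P_u$ yields a section of $\mathcal O_{\pr^1}(j)$ vanishing to order $m>j$, so $G$ vanishes on all five vertical rulings; the degree $(5,0)$ form cutting out these rulings is an element of $I$ dividing $G$, and the quotient lies in $I^{(m-1)}$, giving $(I^{(m)})_{(i,j)}\subseteq I^{(m-1)}I$. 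Next a B\'ezout argument handles small total degree: since $F_1$ meets $G$ in $i+2j$ points with multiplicity but absorbs at least $m$ at each $P_u$, if $i+2j<5m$ then $F_1\mid G$ and $G/F_1\in(I^{(m-1)})_{(i-2,j-1)}$, placing $G$ in $I\,I^{(m-1)}$; symmetrically via $F_2$ when $2i+j<5m$.

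For the deep interior pieces I would use the Zariski decomposition of $D=iH+jV-mE$. When its negative part contains $F_1$ or $F_2$ with positive coefficient, that curve is a fixed component through all five points and is peeled off exactly as in the B\'ezout case. Otherwise I write $D=(-K_X)+G$ with $G=(i-2)H+(j-2)V-(m-1)E$, and if $G$ is nef I apply surjectivity of the multiplication map
\[
H^0(X,G)\otimes H^0(X,-K_X)\longrightarrow H^0(X,iH+jV-mE),
\]
which follows from Castelnuovo--Mumford regularity with respect to the ample anticanonical class on the degree-three del Pezzo surface $X$ (equivalently from \cite[Proposition 2.4]{BH2} after recoordinatizing $X$ as a blow-up of $\pr^2$ as in Remark \ref{relation}). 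Since the sections of $-K_X$ lie in $I_{(2,2)}\subseteq I$, surjectivity forces $(I^{(m)})_{(i,j)}\subseteq I^{(m-1)}I$.

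The hard part will be the intermediate band in which $D$ is nef (so no through-all-points curve is a fixed component and the B\'ezout reductions do not apply) yet $G=D+K_X$ fails to be nef, so that subtracting the anticanonical overshoots; a typical gap case is $j=m$ with $i\geq 3m$, where one checks $D$ is nef but $G\cdot(H-E_u)=-1$. There I expect to replace the anticanonical multiplier by a base-point-free class in $I$ of smaller bidegree, for instance $3H+V-E\in I_{(3,1)}$ (nef, hence base-point free on the del Pezzo), chosen so that the residual divisor $D-(3H+V-E)$ is again nef, and then re-derive surjectivity of the corresponding multiplication map. The bulk of the work, and the reason the five-point argument is genuinely more complicated than the three-point one, is to partition the cone $\{i\geq j\}$ so that every $(i,j)$ is covered by exactly one mechanism (monomial, ruling extraction, B\'ezout peeling, or a surjective multiplication by a suitable base-point-free element of $I$) and to verify the requisite nefness by intersecting against all of the negative curves of Lemma \ref{exccurvesLemma}, rather than the handful that sufficed for three points.
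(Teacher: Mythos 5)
Your framework (reduce to $I^{(m)}\subseteq I^{(m-1)}I$ and iterate, assume $i\geq j$, peel off the $(5,0)$-form when $j<m$ and the $(2,1)$-form $F_1=2H+V-E$ when $i+2j<5m$) coincides with the paper's and is sound; note that once $i\geq j$ and $i+2j\geq 5m$, the condition $2i+j\geq 5m$ is automatic, so your symmetric $F_2$-case is vacuous. The gap is exactly where you write ``the hard part will be'': the cases in which no multiplication map is surjective are the real content of the theorem, and the mechanism you propose there provably cannot close them. First, your blanket principle that a nef residual forces surjectivity of $H^0(X,G)\otimes H^0(X,M)\to H^0(X,G+M)$ is false in this setting. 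The paper proves surjectivity using the maximal-rank theorem of \cite{GuH} together with Fitchett's numerical criterion \cite[Theorem 3.4]{refF}, with multiplier $L=3H+V-E$, which is the \emph{line class} of a blow-down $X\to\pr^2$ at six general points; and Fitchett's criterion exhibits exceptions even when the residual is nef, e.g.\ $A=4H+4V-2E=-2K_X$, where $A-L=H+3V-E$ is nef yet the map is not surjective (the paper handles this by normal generation, \cite[Proposition 3.1(a)]{refHa}). Relatedly, your appeal to \cite[Proposition 2.4]{BH2} for the anticanonical multiplier is misplaced: that proposition concerns multiplication by a line class, and $-K_X$, having self-intersection $3$, is not the line class of any blow-down structure on $X$.

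Second, and more seriously, for $A=iH+jV-mE$ with $(i,j,m)$ in the families $f(5,5,3)$, $(4,3,2)+f(5,5,3)$ and $(2,2,1)+f(5,5,3)$ --- precisely the nef $A$ with $A-L$ not nef, which the paper isolates by computing generators of the semigroup $\{(i,j,m): i\geq j\geq m\geq 0,\ i+2j\geq 5m\}$ --- \emph{no} single multiplication map of the kind you propose is surjective, for \emph{any} choice of multiplier. For instance, at $(i,j,m)=(5,5,3)$ one has $\dim (I^{(3)})_{(5,5)}=6$, while every product $I_{(a,b)}\cdot(I^{(2)})_{(5-a,5-b)}$ has dimension at most $5$ (the best candidates are $I_{(2,1)}\cdot(I^{(2)})_{(3,4)}$ and $I_{(1,2)}\cdot(I^{(2)})_{(4,3)}$, each of dimension $5$, and $I_{(2,2)}\cdot(I^{(2)})_{(3,3)}$ of dimension at most $4$ since $3H+3V-2E$ is rigid); similarly at $(4,3,2)$ every single product has dimension at most $4$ while $\dim(I^{(2)})_{(4,3)}=5$. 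So your plan to ``partition the cone so that every $(i,j)$ is covered by exactly one mechanism'' ending in a surjective multiplication map cannot succeed. The paper instead fills these pieces by \emph{sums} of two products (e.g.\ $(I_{(2,1)})^2I_{(1,3)}+(I_{(1,2)})^2I_{(3,1)}$ fills $(I^{(3)})_{(5,5)}$, by a dimension count using that $F-2C'-2C''$ is not effective), propagates along $f$ using normal generation of $5H+5V-3E$ \cite[Proposition 3.1(a)]{refHa}, and for the other two families runs an induction on $f$ via restriction to a smooth elliptic curve $Q$ (in $|4H+3V-2E|$, resp.\ $|-K_X|$), Mumford's theorem \cite{refMu} on $Q$, the vanishing $h^1(X,F-Q)=0$, and the snake lemma. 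None of these ingredients appears in your proposal, so as written it has a genuine gap at exactly the cases that make five points harder than three.
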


\begin{proof}
We will show that $(I^{(m)})_{(i,j)}\subset I^{(m-1)}I$ for all
$i$ and $j$, and hence that $I^{(m)}\subseteq I^m$.
Since we know $I^m\subseteq I^{(m)}$, this shows equality.
By symmetry, we may assume $i\geq j$. We also know $I_{(5,0)}$
is 1-dimensional, whose single basis element is
the form $G=H_1\cdots H_5$, where $H_s$ is a form
of bi-degree $(1,0)$ defining the horizontal rule through the point $P_s$.
Any form $F\in (I^{(m)})_{(i,j)}$ restricts for each $s$
to a form of degree $j$ on $H_s$, but with order of vanishing at least $m$.
If $j<m$, then $F$ must vanish on the entire horizontal rule through each $P_s$,
and hence each $H_s$ divides $F$, so $G$ divides $F$. I.e.,
if $j<m$, then $(I^{(m)})_{(i,j)}=G(I^{(m-1)})_{(i-5,j)}\subset I^{(m-1)}I$.

We also know that $I_{(2,1)}$ is 1-dimensional, with basis a form $D$ defining
a smooth rational curve $C$ vanishing with order 1 at each point $P_s$.
Likewise, if $i+2j<5m$, then any form $F\in (I^{(m)})_{(i,j)}$ vanishes
on $C$, and hence $D$ divides $F$, so
$(I^{(m)})_{(i,j)}=D(I^{(m-1)})_{(i-2,j-1)}\subset I^{(m-1)}I$.

We now may assume that $i\geq j\geq m\geq 2$ and $i+2j\geq5m$.
This implies $2i+j\geq i+2j\geq 5m$, and it also
implies $i+j>3m$. (To see the latter, given $m\geq 2$, consider the system
of inequalities $i\geq j$, $j\geq m$, $i+j\leq 3m$. The solution set is
a triangular region in the $(i,j)$-plane with vertices $(3m/2,3m/2)$,
$(m,m)$ and $(2m,m)$. Since each vertex has $i+2j<5m$, we see
$i\geq j\geq m\geq 2$ and $i+2j\geq5m$ imply $i+j>3m$.)

There is a natural map $\mu_{(i,j)}: (I^{(m-1)})_{(i-3,j-1)}\otimes I_{(3,1)}\to (I^{(m)})_{(i,j)}$.
Since $\operatorname{Im}(\mu_{(i,j)})=(I^{(m-1)})_{(i-3,j-1)}I_{(3,1)}$,
to finish it is enough to show $(I^{(m)})_{(i,j)}\subseteq I^{(m-1)}I$
whenever $\mu_{(i,j)}$ is not surjective.
We can identify $(I^{(m)})_{(i,j)}$ with $H^0(X, A)$,
and $I_{(3,1)}$ with $H^0(X, L)$, where $A=iH+jV-mE$,
$L=3H+V-E$ and $E=E_1+\cdots+E_5$ are divisors on the blow up $X$ of
$\popo$ at the points $P_1,\ldots,P_5$ with respect to the
usual exceptional configuration $H,V,E_1,\ldots,E_5$.
Surjectivity of $\mu_{(i,j)}$ is equivalent to surjectivity
of the map $H^0(X, A-L)\otimes H^0(X, L)\to H^0(X, A)$,
which we will also denote by $\mu_{(i,j)}$.

Using Lemma \ref{exccurvesLemma}, the inequalities
$i\geq j\geq m\geq 2$, $i+2j\geq5m$, $2i+j\geq 5m$, and $i+j>3m$ show that
$A\cdot B\geq 0$ for every exceptional curve $B$ on $X$, and
hence $A$ is effective and nef
(since for a blow up $X$ of $\popo$ at five multiplicity 1 generic points,
and thus 6 general points of $\pr^2$,
using the results of \cite{GHM} one checks that
the only prime divisors of negative self-intersection are the
exceptional curves, but
any divisor meeting every exceptional curve non-negatively is
effective and nef \cite[Proposition 4.1]{GHM}).

Note that the exceptional configuration
$L, E_1'=H-E_1,E_2'=H-E_2,E_3'=H-E_3,E_4'=H-E_4,E_5'=H-E_5,E_6'=2H+V-E$
corresponds to a birational morphism
$X\to\pr^2$ obtained by blowing up 6 general points of $\pr^2$,
and that $L$ is the pullback of a line in $\pr^2$.
By \cite{GuH}, $\mu_{(i,j)}$ always has maximal rank.
Determining whether $\mu_{(i,j)}$ is surjective or injective
is now purely numerical, and by \cite[Theorem 3.4]{refF}, $\mu_{(i,j)}$
is surjective if $A-L$ is nef, unless either
$A-L=5L-2E_1'-\cdots-2E_6'=H+3V-E$ or
$A-L=t(-K_X-E_s')$ for $t>0$. Note that
$-K_X-E_s'=H+2V-E+E_s$ for $1\leq s\leq 5$
while $-K_X-E_6'=V$. Since each term $E_s$ of $A-L$ has the
same coefficient, $A-L=t(-K_X-E_s')$ is impossible for
$s\neq6$. Thus $\mu_{(i,j)}$ is surjective
if $A-L$ is nef, unless either $A-L=H+3V-E$ or $A-L=tV$ for $t>0$;
i.e., unless either $A=4H+4V-2E$ or $A=3H+tV-E$
for $t >1$. But $A=3H+tV-E$ is not relevant since we are interested
in cases with $m>1$. For the case $A=4H+4V-2E=-2K_X$,
we have surjectivity of
$H^0(X, -K_X)^{\otimes2}\to H^0(X, A)$ by
\cite[Proposition 3.1(a)]{refHa}. Thus
$(I^{(2)})_{(4,4)}=(I_{(2,2)})^2\subset I^2$.

So now it suffices to show that $(I^{(m)})_{(i,j)}\subseteq I^{(m-1)}I$
whenever $A-L$ is not nef but $A$ is nef and $m\geq2$. First we must find all such $A$.

Either by hand or using software such as Normaliz \cite{normaliz},
we can find generators for the semigroup of all
$(i,j,m)$ such that $i\geq j\geq m\geq 0$ and $i+2j\geq 5m$.
The result is that every such $(i,j,m)$ is a non-negative integer linear
combination of $(1,0,0)$, $(1,1,0)$, $(2,2,1)$, $(3,1,1)$, $(4,3,2)$, and $(5, 5, 3)$.
So consider $A=a(1,0,0)+b(1,1,0)+c(2,2,1)+d(3,1,1)+e(4,3,2)+f(5, 5, 3)$,
where here we use $(i,j,m)$ as shorthand for $iH+jV-mE$.

Note $A-L$ is nef for any $A=a(1,0,0)+b(1,1,0)+c(2,2,1)+d(3,1,1)+e(4,3,2)+f(5, 5, 3)$
with $d>0$, since $(3,1,1)=L$. So we may assume $d=0$.
However, $f(5H+5V-3E)-L=(t-1)(5H+5V-3E)+2(H+2V-E)$, where
$H+2V-E$ is an exceptional curve by Lemma \ref{exccurvesLemma}
with $(5H+5V-3E)\cdot (H+2V-E)=0$, so $A-L$ is effective but never nef
for $A=f(5H+5V-3E)$.

In contrast, $(e(4H+3V-2E)-L)\cdot (H+2V-E)<0$ for $e=1$, but
for $e>1$ we have $e(4H+3V-2E)-L=(5H+5V-3E)+(e-2)(4H+3V-2E)$ so, for $e>0$,
$A-L$ is not nef for $A=e(4H+3V-2E)$ if and only if $e=1$.
In particular, if $e>1$, then $A-L$ is nef
for $A=a(1,0,0)+b(1,1,0)+c(2,2,1)+e(4,3,2)+f(5, 5, 3)$
regardless of the values of $a$, $b$, $c$, and $f$.
However, $((4H+3V-2E)+f(5H+5V-3E)-L)\cdot (H+2V-E)<0$ for all $f\geq 0$,
$A-L$ is never nef for $A=(4H+3V-2E)+f(5H+5V-3E)$.

Similarly, for $c\geq0$, $c(2H+2V-E)-L$ is nef if and only if $c>1$, and
$(2H+2V-E)+f(5H+5V-3E)-L$ is never nef, but $(2H+2V-E)+(4H+3V-2E)-L$ is nef.
Thus the only cases with $A=c(2,2,1)+d(3,1,1)+e(4,3,2)+f(5, 5, 3)$
for which $A-L$ is not nef but $m\geq 2$ are:
$A=f(5, 5, 3)$, $f\geq 1$ ;
$A=(4,3,2)+f(5, 5, 3)$, $f\geq0$; and
$A=(2,2,1)+f(5, 5, 3)$, $f\geq1$.

The only other possible cases are obtained from these by adding
on to one of these multiples of either $(1,0,0)$ or $(1,1,0)$.
But $(A-L)+(1,0,0)$ for any of these $A$ is nef, so we do not get any
additional cases by allowing $a>0$ or $b>0$.
I.e., we must check that $(I^{(m)})_{(i,j)}\subseteq I^{(m-1)}I$
only when $(i,j,m)$ is either $(2,2,1)+f(5,5,3)$, $(4,3,2)+f(5,5,3)$ or $f(5,5,3)$.

First, we show $(I^{(m)})_{(i,j)}\subseteq I^{(m-1)}I$ holds for the cases
$f(5,5,3)$. Let $F=5H+5V-3E$. The divisor $E_6'=2H+V-E$
is linearly equivalent to the exceptional curve which is the proper transform
$C'$ of the curve above denoted as $C$. Likewise,
$H+2V-E$ is linearly equivalent to an exceptional curve;
denote this exceptional curve by $C''$.
Note that $F=2C'+(H+3V-E)=2C''+(3H+V-E)$. Thus
$(I_{(2,1)})^2I_{(1,3)}\subseteq (I^{(3)})_{(5,5)}$ and
$(I_{(1,2)})^2I_{(3,1)}\subseteq (I^{(3)})_{(5,5)}$, but
$\dim I_{(1,2)} = \dim I_{(2,1)} = 1$ and $\dim I_{(3,1)} = \dim I_{(1,3)} = 3$,
while $\dim (((I_{(2,1)})^2I_{(1,3)})\cap((I_{(1,2)})^2I_{(3,1)}))=0$ since
$F-2C'-2C''$ is not linearly equivalent to an effective divisor.
Thus $\dim (((I_{(2,1)})^2I_{(1,3)})+((I_{(1,2)})^2I_{(3,1)}))=6=\dim (I^{(3)})_{(5,5)}$,
hence $(I^{(3)})_{(5,5)}\subset I^3$.
Moreover, $F=5H+5V-3E$ is normally generated
by \cite[Proposition 3.1(a)]{refHa}, which means that
$H^0(X,F)^{\otimes n}\to H^0(X, nF)$ is surjective.
Thus $(I^{(3f)})_{(5f,5f)}=((I^{(3)})_{(5,5)})^f$ and hence
$(I^{(3f)})_{(5f,5f)}\subset (I^3)^f=I^{3f}$, as we needed to show.

Now consider $(I^{(2)})_{(4,3)}$. We have $I_{(1,2)}I_{(3,1)}\subseteq(I^{(2)})_{(4,3)}$
and $I_{(2,1)}I_{(2,2)}\subseteq(I^{(2)})_{(4,3)}$, but
$\dim I_{(1,2)}I_{(3,1)} = 3$, $\dim I_{(2,1)}I_{(2,2)} = \dim I_{(2,2)}=4$,
and $\dim ((I_{(1,2)}I_{(3,1)})\cap (I_{(2,1)}I_{(2,2)}))=\dim H^0(X, H)=2$,
so $\dim ((I_{(1,2)}I_{(3,1)})+(I_{(2,1)}I_{(2,2)}))=4+3-2 = 5=\dim (I^{(2)})_{(4,3)}$,
hence $(I^{(2)})_{(4,3)}=((I_{(1,2)}I_{(3,1)})+(I_{(2,1)}I_{(2,2)}))\subset I^2$,
as we needed to show.

Note that $4H+3V-2E=3L-E_1'-\cdots-E_5'$. Since the points are general,
$|3L-E_1'-\cdots-E_5'|$ and hence $|4H+3V-2E|$ contains the class of a smooth elliptic curve,
$Q$. Let $F=5H+5V-3E$.
Tensoring $0\to \mathcal O_X(-Q)\to \mathcal O_X\to \mathcal O_Q\to 0$ by $\mathcal O_X(Q+fF)$
and taking global sections gives
$0\to H^0(X, fF)\to H^0(X, Q+fF)\to H^0(Q, \mathcal O_Q(Q+fF))\to 0$.
Tensoring by $H^0(X, F)=\Gamma_X(F)$ and applying the natural multiplication
maps gives the following commutative diagram (see \cite{refMu}, or
\cite[Lemma 2.3.1]{GHI}):

{
\footnotesize

$$\begin{matrix}
0 & \!\to\! & H^0(X, fF)\otimes \Gamma_X(F)  & \!\to\!  & H^0(X, Q+fF)\otimes \Gamma_X(F)
& \!\to\!  &H^0(Q, \mathcal O_Q(Q+fF))\otimes \Gamma_X(F) & \!\to\!  & 0 \cr
{} & {} & \downarrow  & {} & \downarrow & {} & \downarrow & {} & {} \cr
0 & \!\to\! & H^0(X, (f+1)F)  & \!\to\!  & H^0(X, Q+(f+1)F) & \!\to\!  & H^0(Q, \mathcal O_Q(Q+(f+1)F)) & \!\to\!  & 0 \cr
\end{matrix}$$
}

Since $F-Q$ is linearly equivalent to an exceptional curve and hence $h^1(X, F-Q)=0$, the sequence
$0\to \mathcal O_X(F-Q)\to \mathcal O_X(F)\to \mathcal O_Q(F)\to 0$
is exact on global sections. Thus the map
$H^0(Q, \mathcal O_Q(Q+fF))\otimes \Gamma_X(F) \to H^0(Q, \mathcal O_Q(Q+(f+1)F))$
has the same image as $H^0(Q, \mathcal O_Q(Q+fF))\otimes \Gamma_Q(F) \to H^0(Q, \mathcal O_Q(Q+(f+1)F))$,
and the latter is surjective by \cite[Theorem 6]{refMu} (or see \cite[Proposition II.5(c)]{refHa2}).
We saw above that $F$ is normally generated, and hence that the map
$H^0(X, fF)\otimes \Gamma_X(F) \to H^0(X, (f+1)F)$ is surjective. Now apply the snake lemma to the
above diagram to conclude that
$H^0(X, Q+fF)\otimes \Gamma_X(F) \to H^0(X, Q+(f+1)F)$ is surjective.
By induction, we have surjectivity for all $f\geq0$ and hence
$(I^{(2+3f)})_{(4+5f,3+5f)}=(I^{(2)})_{(4,3)}((I^{(3)})_{(5,5)})^f\subset I^2I^{3f}=I^{2+3f}$.

Finally we consider the case of $(2,2,1)+f(5,5,3)$.  The proof here is the same as for
$(4,3,2)+f(5,5,3)$, except now $Q$ is a smooth elliptic curve linearly equivalent to
$-K_X=3L-E_1'-\cdots-E_6'$ and $F-Q$ is linearly equivalent to
the sum $C'+C''$ of two disjoint exceptional curves, so as before we have
$h^1(X,F-Q)=0$. Thus
$(I^{(1+3f)})_{(2+5f,2+5f)}=(I)_{(2,2)}((I^{(3)})_{(5,5)})^f\subset I^{1+3f}$.
\end{proof}

\subsection{Non-equality of $I^{(m)}$ and $I^m$}
While computer calculations suggest that $I^{(2)} = I^2$ for the ideal $I$
of four multiplicity 1 generic points in $\popo$, it is not hard to see that $I^{(3)}\neq I^3$.
This is because $\alpha(I)=3$, so $\alpha(I^3)=9$, but there is a unique curve
of bi-degree $(1,1)$ through any three of the four points (corresponding to
the divisors $H+V-E_1-E_2-E_3-E_4+E_i$ in Lemma \ref{exccurvesLemma}),
hence the sum of these four curves corresponds to a non-trivial form in
$(I^{(3)})_{(4,4)}$. Thus $\alpha(I^{(3)})\leq 8$, so $I^{(3)}\not\subseteq I^3$.

In fact, the case of four multiplicity 1 generic points is part of a much larger family,
namely a set $Z$ of $s$ points in multiplicity 1 generic position when $s = t^2$ for some
integer $t \geq 2$.  For this family, we can in a similar way
verify failures of containments of certain symbolic powers of the ideal $I(Z)$ of the points
in various ordinary powers of the ideal.

\begin{theorem}\label{squaregenericpoints}
Let $I = I(Z)$ where $Z \subseteq \popo$ is a set of $s = t^2$
points in multiplicity 1 generic position with $t \geq 2$.
Then for all integers $n \geq 1$,
\[ I^{((s-1)(2t-1)n)} \not\subseteq I^{2s(t-1)n+1}.\]
\end{theorem}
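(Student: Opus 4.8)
The plan is to prove the non-containment by a degree count, using the elementary observation that if $I^{(a)}\subseteq I^b$ then $\alpha(I^{(a)})\ge\alpha(I^b)=b\,\alpha(I)$; hence it suffices to exhibit a nonzero form in $I^{(a)}$ whose total degree is strictly smaller than $b\,\alpha(I)$, where throughout I abbreviate $a=(s-1)(2t-1)n$ and $b=2s(t-1)n+1$. First I would record the two exact invariants. Since the points are in multiplicity $1$ generic position, the full reduced set imposes independent conditions in every bidegree, so a form of bidegree $(i,j)$ through all $s=t^2$ points exists exactly when $(i+1)(j+1)>t^2$; the least total degree $i+j$ for which this holds is $2t-1$ (attained at $(t-1,t)$), and it fails at $i+j=2t-2$ (where the maximum of $(i+1)(j+1)$ is $t^2=s$). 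Hence $\alpha(I)=2t-1$ and $\alpha(I^b)=b(2t-1)$.

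The heart of the argument is to bound $\alpha(I^{(a)})$ from above by producing an explicit effective divisor built only out of \emph{reduced} point conditions, so that multiplicity $1$ generic position is exactly enough. For each $i$ let $D_i=(t-1)(H+V)-\sum_{j\neq i}E_j$ be the class of the bidegree $(t-1,t-1)$ forms vanishing at the $s-1$ points $\{P_j:j\neq i\}$. By multiplicity $1$ generic position these $s-1$ points impose independent conditions, and since $(t-1+1)^2=t^2>s-1$, the class $D_i$ is effective, with $h^0(X,D_i)=t^2-(s-1)=1$. Summing yields the effective class
$$C_1=\sum_{i=1}^s D_i=s(t-1)(H+V)-(s-1)\sum_{i=1}^s E_i=t^2(t-1)(H+V)-(s-1)\sum_{i=1}^s E_i.$$
Applying Lemma \ref{gammaLemma} to $C_1$ with $\lambda=t^2(t-1)$ and $m=s-1$, and taking $l=(2t-1)n$ copies, gives $\alpha\big(I^{((s-1)(2t-1)n)}\big)\le 2\lambda l=2t^2(t-1)(2t-1)n$; concretely, a product of $s(2t-1)n$ forms of the types defining the $D_i$ furnishes an element of $I^{(a)}$ of total degree $2t^2(t-1)(2t-1)n$.

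It then remains to compare the two quantities. A direct computation gives
$$b\,\alpha(I)-2t^2(t-1)(2t-1)n=\big(2t^2(t-1)n+1\big)(2t-1)-2t^2(t-1)(2t-1)n=2t-1>0,$$
so $\alpha(I^{(a)})\le 2t^2(t-1)(2t-1)n<b\,\alpha(I)=\alpha(I^b)$, and the claimed non-containment $I^{(a)}\not\subseteq I^b$ follows at once.

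The only delicate point, and the sole place where the hypothesis is really used, is the effectivity of the building blocks $D_i$. I expect the main obstacle to be the temptation to work instead with the more symmetric class $t^2(H+V)-(t+1)\sum_iE_i$, which has the same slope $t^2/(t+1)$ and the same resulting degree bound: proving \emph{that} class effective would require controlling a genuine fat-point linear system (an order $t+1$ condition at each point) rather than a reduced one, and this need not follow from multiplicity $1$ generic position alone. Choosing multiplicity $s-1$ instead has two virtues that make the argument go through uniformly: each $D_i$ is a reduced-point condition governed directly by the hypothesis, and $s-1$ divides $a$, so the same construction works verbatim for every $n\ge 1$.
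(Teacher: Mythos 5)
Your proof is correct and is essentially the paper's own argument: the sections of your divisors $D_i$ are exactly the paper's forms $F_i$ of bidegree $(t-1,t-1)$ through the $s-1$ points $Z\setminus\{P_i\}$, your effective class $C_1$ corresponds to the paper's product $F=\prod_{i=1}^{s}F_i\in I^{(s-1)}$, and the concluding comparison $2s(t-1)(2t-1)n<(2s(t-1)n+1)(2t-1)$ via $\alpha(I)=2t-1$ is the same degree count. The only (cosmetic) difference is that you phrase the construction in divisor-class language on the blow-up, citing the effectivity step inside Lemma \ref{gammaLemma}, where the paper works directly with bihomogeneous forms.
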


\begin{proof}
We begin by showing that the symbolic power $I^{((s-1)(2t-1)n)}$
has a nonzero element of bidegree $((t-1)s(2t-1)n,(t-1)s(2t-1)n)$.
For each point $P_i \in Z$, let $Y_i = Z \setminus \{P_i\}$.  Then
$Y_i$ is a set of $s-1$ points in multiplicity 1 generic position for each
$i=1,\ldots,s$ and hence
\[\dim(I(Y_i)_{(t-1,t-1)}) = \max\{t^2-|Y_i|,0\} = \max\{s-(s-1),0\}= 1.\]
Thus, for each $i =1,\ldots,s$, there is a form $F_i$ (unique up to scalar multiplication)
that vanishes at all of the points of $Y_i$. Moreover, $F_i$ does
not vanish at $P_i$.  Indeed, if $F_i(P_i) = 0$, then $F_i \in I(Z)_{(t-1,t-1)}$,
but $I(Z)_{(t-1,t-1)}=0$ since $\dim(I_{(t-1,t-1)})=\max\{t^2-|Z|,0\} = 0$.

Set $F = \prod_{i=1}^{s} F_i$.  The form $F$ has degree $((t-1)s,(t-1)s)$
and passes through all the points of $Z$ with multiplicity
at least $s-1$, so $F \in I^{(s-1)}$.  Thus
$F^{(2t-1)n} \in (I^{(s-1)})^{(2t-1)n}\subseteq I^{((s-1)(2t-1)n)}$
and $\deg F^{(2t-1)n} = ((t-1)s(2t-1)n,(t-1)s(2t-1)n)$ for each $n\geq1$.

To show $I^{((s-1)(2t-1)n)} \not\subseteq I^{2s(t-1)n+1}$, it is now enough to check that
$$(I^{2s(t-1)n+1})_{((t-1)s(2t-1)n,(t-1)s(2t-1)n)}=0.$$
Because the points of $Z$ are in multiplicity 1 generic position, then for $i+j=2(t-1)$, $i,j\geq0$,
we have $(i+1)(j+1)\leq t^2=|Z|$, so $\dim(I_{(i,j)})=0$. Thus,
viewing $I$ as a singly graded ideal, we have $\alpha(I)\geq 2t-1$, hence
$$\alpha(I^{2s(t-1)n+1})\geq (2s(t-1)n+1)(2t-1)>2s(t-1)n(2t-1)$$
and so $(I^{2s(t-1)n+1})_{(s(t-1)n(2t-1),s(t-1)n(2t-1))}=0$.
\end{proof}

We round out this section by comparing the symbolic squares and ordinary squares
of ideals of six or more points in multiplicity 1 generic position.

\begin{proposition}\label{6pts}
Let $I = I(Z)$ with $Z \subseteq \popo$ be a set of $6$
points in multiplicity 1 generic position.  Then $I^2 \neq I^{(2)}$.
\end{proposition}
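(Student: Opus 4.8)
The plan is to show $I^2\neq I^{(2)}$ by the same mechanism used above for $s=4$: exhibit a form lying in the symbolic square but having strictly smaller total degree than anything in the ordinary square. Concretely, working in the single (total-degree) grading on $k[\popo]=k[\pr^3]$, I will prove $\alpha(I^{(2)})\le 7$ while $\alpha(I^2)=8$, which immediately forces $I^2\neq I^{(2)}$ since $I^2\subseteq I^{(2)}$.

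First I would pin down $\alpha(I)$. Because $Z$ is in multiplicity $1$ generic position, the reduced scheme $Z$ has generic Hilbert function, so $\dim I_{(i,j)}=\max\{(i+1)(j+1)-6,\,0\}$. Every $(i,j)$ with $i+j\le 3$ satisfies $(i+1)(j+1)\le 6$, hence $I_{(i,j)}=0$ there, while $\dim I_{(2,2)}=9-6=3>0$; thus $\alpha(I)=4$. Now every element of $I^2$ is a sum of products $fg$ with $f,g\in I$ homogeneous of total degree at least $\alpha(I)=4$, so no nonzero element of $I^2$ has total degree below $8$, whereas $(I_{(2,2)})^2\subseteq (I^2)_{(4,4)}$ is nonzero; hence $\alpha(I^2)=2\alpha(I)=8$.

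Next I would produce a degree-$7$ element of $I^{(2)}$ by a crude dimension count in bidegree $(3,4)$, where $\dim R_{(3,4)}=4\cdot 5=20$. Requiring a form to lie in $I(P_i)^2$ imposes at most $3$ linear conditions per point (the colength of $\mathfrak m^2$ at a smooth surface point is $1+2=3$), so the six double points cut out a subspace of codimension at most $18$, and using $I^{(2)}=\bigcap_{i=1}^6 I(P_i)^2$ we get
\[\dim (I^{(2)})_{(3,4)}=\dim\Big(\textstyle\bigcap_{i=1}^{6} I(P_i)^2\Big)_{(3,4)}\ \ge\ 20-18\ =\ 2\ >\ 0.\]
Any nonzero $F\in (I^{(2)})_{(3,4)}$ has total degree $7$, so $\alpha(I^{(2)})\le 7<8=\alpha(I^2)$, and such an $F$ lies in $I^{(2)}\setminus I^2$. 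This proves $I^2\neq I^{(2)}$.

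The one point to watch, and the only place where there is any temptation to do more work than necessary, is the computation of $\dim (I^{(2)})_{(3,4)}$: computing it \emph{exactly} would require the Hilbert function of the non-reduced scheme $2P_1+\cdots+2P_6$, which being in multiplicity $1$ generic position does not control, since that hypothesis only governs reduced subschemes. The key observation that makes the argument go through uniformly is that only the universal lower bound $\dim (I^{(2)})_{(i,j)}\ge (i+1)(j+1)-3s$ is needed, and this holds for any $s$ points with no genericity beyond what is already used to fix $\alpha(I)$. This is the same bookkeeping that underlies the $s=4$ discussion and Theorem \ref{squaregenericpoints}.
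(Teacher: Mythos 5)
Your proof is correct and takes essentially the same route as the paper's: the paper likewise bounds $\dim\bigl((I^{(2)})_{(3,4)}\bigr)\geq 20-6\cdot 3=2$ to get $\alpha(I^{(2)})\leq 7$, and compares this with $\alpha(I)=4$, hence $\alpha(I^2)=8$. The details you add (deriving $\alpha(I)=4$ from the generic Hilbert function, and observing that only the universal lower bound on conditions imposed by double points is needed) simply make explicit what the paper leaves implicit.
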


\begin{proof}
Since $Z$
imposes at most $6\binom{2+1}{2}=18$ conditions on forms of
bidegree $(3,4)$, we see $\dim ((I^{(2)})_{(3,4)})\geq 2$. Thus
$\alpha(I^{(2)})\leq 7$, but using the fact that $I$ is multiplicity 1 generic
we compute that $\alpha(I)= 4$ so
$\alpha(I^2)= 8$, and hence $I^2\subsetneq I^{(2)}$.
\end{proof}

To extend this result to 7 or more points, we require \cite[Theorem 1]{VT3}.
We state only the part we need:

\begin{lemma} \label{secondsymb}
Let $Z \subseteq \popo$ be a set of $s$ points in multiplicity 1 generic
position, with defining ideal $I = I(Z)$.  If $(i,j) \not\in \{(2,s-1),(s-1,2)\}$,
then
\[\dim (I^{(2)})_{(i,j)} = \max\{0, (i+1)(j+1) - 3s\}.\]
\end{lemma}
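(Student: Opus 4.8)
The plan is to compute $\dim(I^{(2)})_{(i,j)}$ by passing to the blow up $X$ of $\popo$ at the $s$ points $P_1,\ldots,P_s$, with exceptional configuration $H,V,E_1,\ldots,E_s$. Writing $E=E_1+\cdots+E_s$, we have $(I^{(2)})_{(i,j)}\cong H^0(X,D)$ where $D=iH+jV-2E$, so the claim amounts to showing $h^0(X,D)=\max\{0,(i+1)(j+1)-3s\}$ whenever $(i,j)\notin\{(2,s-1),(s-1,2)\}$. Since $\chi(\mathcal{O}_X(D))=1+\tfrac12(D^2-D\cdot K_X)$ and one computes $D^2-D\cdot K_X=2(i+1)(j+1)-6s$, Riemann--Roch gives $h^0(X,D)-h^1(X,D)+h^2(X,D)=(i+1)(j+1)-3s$. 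Thus the whole problem reduces to controlling the higher cohomology: I would show that, away from the two excluded bidegrees, either $h^0(X,D)=0$ and the expected value is $\le 0$, or else $h^1(X,D)=h^2(X,D)=0$ so that $h^0(X,D)$ equals the Euler characteristic.

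First I would dispose of $h^2$: by Serre duality $h^2(X,D)=h^0(X,K_X-D)$, and since $K_X=-2H-2V+E$ the class $K_X-D=-(i+2)H-(j+2)V+3E$ has negative $H$- and $V$-coefficients, so it is not effective and $h^2(X,D)=0$ always. The substantive content is therefore the vanishing of $h^1$, equivalently (by the structure of the problem) the statement that the $s$ double points impose independent conditions on forms of bidegree $(i,j)$ except in the two listed bidegrees. This is exactly where I would invoke \cite[Theorem 1]{VT3} (the fat-point result quoted as the source of this lemma): the double points $2P_1+\cdots+2P_s$ in multiplicity 1 generic position have the expected Hilbert function in bidegree $(i,j)$ precisely when $(i,j)\notin\{(2,s-1),(s-1,2)\}$. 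Having the Hilbert function be $\max\{0,(i+1)(j+1)-3s\}$ is equivalent to the assertion $\dim(I^{(2)})_{(i,j)}=\max\{0,(i+1)(j+1)-3s\}$, so the lemma is a direct specialization of that theorem.

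Concretely, the cleanest route is not to reprove independence from scratch but to cite \cite[Theorem 1]{VT3} and translate its statement into the present normalization. The key step is therefore the bookkeeping: matching the exceptional bidegrees $(2,s-1)$ and $(s-1,2)$ of the lemma with the exceptional cases appearing in the cited theorem, and checking that the ``$3s$'' in $\max\{0,(i+1)(j+1)-3s\}$ is exactly $\dim\mathcal{O}_{\popo}$-span minus the number $3s$ of conditions imposed by $s$ general double points (each double point imposing $\binom{2+1}{2}=3$ conditions). The deficiency in the two excluded bidegrees has a geometric origin—the unique $(1,1)$-curves or $(2,1)$/$(1,2)$-curves through subsets of the points force a jump—mirroring the phenomenon already exploited in the proofs of Theorem~\ref{5pts} and Proposition~\ref{6pts}.

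The main obstacle I anticipate is purely one of attribution and reconciliation: \cite{VT3} is phrased for fat points in $\popo$ with its own conventions, and the work is in confirming that its exceptional locus collapses to exactly the two bidegrees $(2,s-1)$ and $(s-1,2)$ for double points and no others, so that no further exceptional cases leak into our statement. Once that dictionary is in place the lemma follows immediately, since everything else (the Euler characteristic computation and the vanishing of $h^2$) is routine. I would accordingly keep the proof short, deriving the formula from \cite[Theorem 1]{VT3} and noting only that Riemann--Roch together with $h^2(X,D)=0$ identifies the asserted dimension with $\max\{0,(i+1)(j+1)-3s\}$ in the non-exceptional range.
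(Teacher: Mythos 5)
Your proposal is correct and takes essentially the same route as the paper: the paper gives no proof of Lemma \ref{secondsymb} at all, stating it as a direct quotation of the part of \cite[Theorem 1]{VT3} that is needed, which is exactly the citation your argument ultimately rests on (your Riemann--Roch framing is consistent but superfluous, since the cited theorem already gives the Hilbert function of the double points away from the bidegrees $(2,s-1)$ and $(s-1,2)$). One trivial slip: the intermediate quantity should be $D^2-D\cdot K_X=2(i+1)(j+1)-2-6s$, which together with $\chi(\mathcal{O}_X)=1$ yields the Euler characteristic $(i+1)(j+1)-3s$ that you correctly state.
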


We now proceed to the case of 7 or more points:

\begin{theorem} \label{secondpowervssecondsymbolic}
Let $I = I(Z)$ with $Z \subseteq \popo$ be a set of $s=|Z| \geq 7$
points in multiplicity 1 generic position.  Then
$I^2 \neq I^{(2)}$.
\end{theorem}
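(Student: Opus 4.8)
The plan is to show $\alpha(I^2) > \alpha(I^{(2)})$, which immediately gives $I^2 \subsetneq I^{(2)}$ (recall $I^2 \subseteq I^{(2)}$ always holds). First I would compute $\alpha(I^2)$. Since $\alpha(I^2) = 2\alpha(I)$ for any homogeneous ideal, I need $\alpha(I)$ viewed as a singly graded ideal. Because $Z$ consists of $s$ points in multiplicity $1$ generic position, the points impose independent conditions, so $\dim I_{(i,j)} = \max\{0, (i+1)(j+1) - s\}$. Thus $\alpha(I)$ is the least $t = i+j$ for which some bidegree $(i,j)$ with $i+j = t$ has $(i+1)(j+1) > s$; maximizing $(i+1)(j+1)$ over $i+j = t$ occurs at $i = j$, giving the criterion that $\alpha(I)$ is roughly $2(\sqrt{s}-1)$, exactly as in the proof of Corollary \ref{boundsCor}. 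For the small values $7 \le s$ this yields explicit integer values of $\alpha(I)$ and hence of $\alpha(I^2) = 2\alpha(I)$.

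Next I would bound $\alpha(I^{(2)})$ from above using Lemma \ref{secondsymb}, which gives $\dim(I^{(2)})_{(i,j)} = \max\{0,(i+1)(j+1) - 3s\}$ for all bidegrees outside the two exceptional ones $(2,s-1)$ and $(s-1,2)$. So I would search for the smallest total degree $t = i+j$ admitting a bidegree $(i,j)$ (avoiding the exceptional pair, which lies at large total degree $s+1$ and is irrelevant here) with $(i+1)(j+1) > 3s$; the largest product at fixed $t$ is again attained near $i = j$, so $\alpha(I^{(2)})$ is approximately $2(\sqrt{3s} - 1)$. The crux of the argument is the strict inequality
\[
\alpha(I^{(2)}) \;<\; 2\,\alpha(I) \;=\; \alpha(I^2),
\]
which should hold for all $s \ge 7$ because $\sqrt{3s}-1 < 2(\sqrt{s}-1)$ asymptotically, i.e. $\sqrt{3s} < 2\sqrt{s} - 1$, which holds once $s$ is large; the point is that doubling $\alpha(I)$ overshoots, while the symbolic square only needs the product to clear $3s$ rather than $4s$.

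The main obstacle I anticipate is that the comparison rests on integer-valued ceiling functions of $\sqrt{s}$ and $\sqrt{3s}$, not on the continuous asymptotics, so for the borderline small cases $s = 7, 8, \ldots$ I cannot simply invoke the asymptotic inequality. I would therefore handle these by a direct finite check: for each such $s$ compute the exact $\alpha(I) = \min\{i+j : (i+1)(j+1) > s\}$ and the exact $\alpha(I^{(2)}) = \min\{i+j : (i+1)(j+1) > 3s,\ (i,j) \notin \{(2,s-1),(s-1,2)\}\}$, and verify $\alpha(I^{(2)}) < 2\alpha(I)$ in each case. For the remaining (larger) $s$ I would convert the square-root inequality into a clean algebraic statement: writing $a = \alpha(I)$, independence of conditions gives $(\lceil a/2\rceil + 1)(\lfloor a/2\rfloor + 1) > s$ but $(i+1)(j+1) \le s$ for all $i+j = a-1$, pinning down $a$ tightly in terms of $s$; one then checks that the smallest $t$ clearing $3s$ satisfies $t < 2a$. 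I expect this to reduce to verifying that $3s < (a+1)^2$ forces the symbolic threshold below $2a$, since $(a+1)^2 > 3s$ would be guaranteed by $(a/2 + 1)^2 > s$ once $s \ge 7$. Carrying out this last numerical reduction cleanly, without gaps at the boundary values, is the part that requires the most care.
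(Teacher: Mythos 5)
Your strategy hinges on the strict inequality $\alpha(I^{(2)}) < \alpha(I^2) = 2\alpha(I)$, and that inequality is false precisely at the first cases of the theorem. For points in multiplicity 1 generic position, $\alpha(I) = \min\{i+j : (i+1)(j+1) > s\}$, and by Lemma \ref{secondsymb} we get $\alpha(I^{(2)}) = \min\{i+j : (i+1)(j+1) > 3s\}$ (the exceptional bidegrees $(2,s-1)$ and $(s-1,2)$ sit in total degree $s+1 \geq 2\alpha(I)$ for all $s\geq 7$, so they cannot lower $\alpha(I^{(2)})$ below $2\alpha(I)$). Now take $s=7$: one finds $\alpha(I)=4$ (via $(2,2)$, since $9>7$), so $\alpha(I^2)=8$; but in total degree $7$ the largest product is $(3+1)(4+1)=20\leq 21$, while $(4,4)$ gives $25>21$, so $\alpha(I^{(2)})=8$ as well. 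Writing $a=\alpha(I)$, your inequality fails exactly when $a(a+1)\leq 3s$ (the maximal product in total degree $2a-1$ is $a(a+1)$), and a short case analysis over the ranges of $s$ compatible with each value of $a$ shows this happens for $s\in\{7,8,10,11,14,15,19,24\}$ and for no other $s\geq7$. So the ``direct finite check'' you propose for borderline cases would refute, rather than confirm, the crucial inequality, starting at the very first value $s=7$; your asymptotic reasoning is sound for large $s$, but the theorem's delicate cases are exactly the small ones, and there $\alpha$ alone cannot distinguish $I^2$ from $I^{(2)}$. (Your closing heuristic also has slippage of exactly this size: $(a/2+1)^2 > s$ gives $(a+2)^2 > 4s > 3s$, not $(a+1)^2 > 3s$, and even $(a+1)^2>3s$ would not suffice since the relevant threshold is $a(a+1)>3s$.)

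When the initial degrees coincide one must compare dimensions instead, and that is what the paper does---in a deliberately lopsided bidegree rather than a balanced one. Writing $s=2q_1+r_1=3q_2+r_2$, the paper works in bidegree $(3,q_1+q_2)$: there the only splitting $(a,b)+(c,d)$ with both $I_{(a,b)}$ and $I_{(c,d)}$ nonzero is $(1,q_1)+(2,q_2)$, so $(I^2)_{(3,q_1+q_2)} = I_{(1,q_1)}I_{(2,q_2)}$ has dimension at most $(2-r_1)(3-r_2)\leq 6$, a bounded quantity, while Lemma \ref{secondsymb} gives $\dim (I^{(2)})_{(3,q_1+q_2)} = q_2+4-2r_1-r_2$, which grows linearly in $s$; the strict inequality then holds for every $s\geq7$. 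If you wish to salvage your approach, you would have to supplement it for the eight exceptional values of $s$ with exactly this kind of dimension count in a well-chosen bidegree---at which point you have reproduced the paper's argument in its hardest cases.
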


\begin{proof}
Let $I = I(Z)$.  To show that $I^2 \neq I^{(2)}$,
we find a bidegree $(i,j)$ where $(I^2)_{(i,j)} \neq (I^{(2)})_{(i,j)}$,
which we verify
by showing that the two graded pieces have different dimensions.

We divide $s$ by $2$ and by $3$ to write $s$ as
$s = 2q_1 +r_1$ and $s=3q_2+r_2$ where $0 \leq r_1 \leq 1$ and $0 \leq r_2 \leq 2$.
Because $Z$ is in multiplicity 1 generic position,
\begin{eqnarray*}
H_Z(1,q_1) &=&  \min\{2(q_1+1),2q_1+r_1\} = 2q_1+r_1 ~~\mbox{and} \\
H_Z(2,q_2) &=& \min\{3(q_2+1),3q_2+r_2\} = 3q_2+r_2.
\end{eqnarray*}
It then follows from the Hilbert function that
$\dim (I_{(1,q_1)}) = 2(q_1+1)-H_Z(1,q_1)=2-r_1$ and
$ \dim (I_{(2,q_2)}) = 3(q_2+1)-H_Z(2,q_2)=3-r_2$.
We will use this information, and Lemma \ref{secondsymb}, to compare
the ideals $I^2$ and $I^{(2)}$ in bidegree $(3,q_1+q_2)$.  We require two claims.

\noindent
{\em Claim 1.}  $\dim ((I^2)_{(3,q_1+q_2)}) \leq (2-r_1)(3-r_2)$.

\noindent
{\em Proof of Claim 1.}  We first note that
\[(I^2)_{(3,q_1+q_2)} = \!\!\!\!\!\!\!\!\!\!\!\!\sum_{
\begin{array}{c}
\scriptstyle 0 \leq a,b,c,d \\
\scriptstyle a+c=3,\ b+d=q_1+q_2
\end{array}
} \!\!\!\!\!\!\!\!\!\!\!\! I_{(a,b)}I_{(c,d)} .\]
The claim will follow if we show that
whenever $(a,b) \not\in \{(1,q_1),(2,q_2)\}$, then
$I_{(a,b)}I_{(c,d)} = 0$.
This would then show that $(I^2)_{(3,q_1+q_2)} = I_{(1,q_1)}I_{(2,q_2)}$,
and thus
\[\dim ((I^2)_{(3,q_1+q_2)})  \leq \dim (I_{(1,q_1)}) \dim (I_{(2,q_2)}) = (2-r_1)(3-r_2).\]

If $a=0$, then $I_{(a,b)} = 0$ since $Z$ is in multiplicity 1 generic position and $0 \leq b\leq
q_1+q_2 \leq s-1$. Likewise, $I_{(c,d)} = 0$ if $c=0$.

If $a =1$ and $b\neq q_1$, then there are two cases.
If $b<q_1$, then $I_{(a,b)} = 0$, since $H_Z(a,b) = \min\{(a+1)(b+1),2q_1+r_1\} =
(a+1)(b+1)$.  On the other hand, if $b>q_1$, then $I_{(c,d)}=0$ since $c=2$ and
$d=q_1+q_2-b < q_2$, so $(c+1)(d+1) \leq 3q_2 \leq 3q_2+r_2$, whence
$H_Z(c,d) = (c+1)(d+1)$. Likewise, $I_{(a,b)}I_{(c,d)} = 0$ if $c=1$ and $d\neq q_1$.

Finally, if $a\geq2$, then $c\leq 1$, so the same arguments apply.
\hfill$\Box$

\noindent
{\em Claim 2.} $\dim(I^{(2)})_{(3,q_1+q_2)} = q_2 + 4 -2r_1-r_2.$

\noindent
{\em Proof of Claim 2.}  By Lemma \ref{secondsymb}, we have
$\dim(I^{(2)})_{(3,q_1+q_2)} = \max\{0,4(q_1+q_2+1)-3s\}.$
By the definition of $q_1$ and $q_2$, we have $s \leq 2q_1+1$ and $s \leq 3q_2+2$.
So
\begin{eqnarray*}
4(q_1+q_2+1) - 3s & = & 4q_1+4q_2 + 4 - 3s \\
& = & (2q_1+1) + (2q_1+1) + (3q_2+2) + q_2 -3s \geq 0.
\end{eqnarray*}
Thus $\dim(I^{(2)})_{(3,q_1+q_2)} = 4(q_1+q_2+1)-3s$.  Now we get
\begin{eqnarray*}
4(q_1+q_2+1) - 3s & =& 4q_1+4q_2 + 4 - 2(2q_1+r_1) - (3q_2+r_2)  =q_2 + 4 -2r_1-r_2.
\end{eqnarray*}
by using the fact that $s=2q_1+r_1$ and $s = 3q_2+r_2$.
\hfill$\Box$

To complete the proof, it suffices to show that
\[\dim (I^{(2)})_{(3,q_1+q_2)} = q_2 + 4 -2r_1 -r_2 > (2-r_1)(3-r_2) \geq  \dim (I^2)_{(3,q_1+q_2)}.\]
But $q_2 + 4 -2r_1 -r_2 > (2-r_1)(3-r_2)$ is equivalent to
$q_2-1> (r_1-1)(r_2-1)$. The maximum value of $(r_1-1)(r_2-1)$ is 1, and it occurs only for
$r_1=r_2=0$, whereas $q_2-1>1$ unless $s=7$ or 8, and in both of these cases we have
$q_1-1=1\geq0\geq(r_1-1)(r_2-1)$.
\end{proof}

\begin{remark}
We cannot use the above proof for the case $s=6$ because
$q_2 + 4 -2r_1 -r_2 = (2-r_1)(3-r_2)$ when $s=6$
but the proof needs $q_2 + 4 -2r_1 -r_2 > (2-r_1)(3-r_2)$.
\end{remark}

Now, we are able to prove the main result of this paper:

\begin{proof}[Proof of Theorem \ref{thm1}]
That $I^{(m)}=I^m$ for all $m\geq1$ for $s$ general points for $s=1,2,3,5$, follows from
Theorems \ref{completeintersections}, \ref{2pts}, \ref{3pts} and \ref{5pts}, respectively.
That $I^{(m)}\neq I^m$ for some $m$ for all other $s$ follows for $s=4$
by Theorem \ref{squaregenericpoints} (apply the theorem with $t=2$),
for $s=6$ by Proposition \ref{6pts} and for $s>6$ by Theorem \ref{secondpowervssecondsymbolic}.
\end{proof}


\end{document}